\newtheorem{theorem}{Theorem}
\newtheorem{lemma}[theorem]{Lemma}
\newtheorem{corollary}[theorem]{Corollary}
\newtheorem{claim}{Claim}
\def\const{{0.4352}}
\def\constfrac{{2.2978}}
\def\constcut{{0.8704}}
\def\constcutnot{{0.8834}}
\def\uppercut{{0.9351}}
\def\upper{{0.455}} 
\def\hoppen{{0.4328}}
\def\duckworth{{0.4347}} 
\def\experiment{{0.439}}
\def\experimentnew{{0.447}}
\def\rounds{{307449}}
\def\lra{{\leftrightarrow}}
\def\C{{\cal C}}
\def\D{{\cal D}}
\def\J{{\cal J}}
\def\T{{\cal T}}
\def\cond{{ \; | \; }}
\def\Pr{{\mathbf P}}
\def\dint{{\mathbf d}}
\def\vec{\overrightarrow}
\def\bar{\overline}
\scriptsize\setstretch{1},
\begin{document}
\title{Fractional colorings of cubic graphs with large girth}
\author{Franti\v sek Kardo\v s\thanks{Institute of Mathematics, Faculty of Science, University of Pavol Jozef \v Saf\'arik, Jesenn\'a 5, 041 54 Ko\v sice, Slovakia. E-mail: {\tt frantisek.kardos@upjs.sk}. This author was supported by Slovak Research and Development Agency under the contract no. APVV-0007-07.}
\and	Daniel Kr\'al'\thanks{Department of Applied Mathematics and Institute for Theoretical Computer Science (ITI), Faculty of Mathematics and Physics, Charles University, Malostransk\'e n\'am\v est\'\i{} 25, 118 00 Prague 1, Czech Republic. E-mail: {\tt kral@kam.mff.cuni.cz}. Institute for Theoretical computer science is supported as project 1M0545 by Czech Ministry of Education. This author was supported by the grant GACR 201/09/0197 and in part
was also supported by the grant GAUK~60310.
}
\and	Jan Volec\thanks
{Department of Applied Mathematics, Faculty of Mathematics and Physics, Charles University, Malostransk\'e n\'am\v est\'\i{} 25, 118 00 Prague 1, Czech Republic. E-mail: {\tt volec@kam.mff.cuni.cz}. This author was supported by the grant GACR 201/09/0197.}
}

\date{}
\maketitle
\begin{abstract}
We show that every (sub)cubic $n$-vertex graph with sufficiently large girth
has fractional chromatic number at most $\constfrac$ which implies that
it contains an independent set of size at least $\const n$. Our bound on the independence number
is valid to random cubic graphs as well as it improves existing lower bounds
on the maximum cut in cubic graphs with large girth.
\end{abstract}

\section{Introduction}
\label{sect-intro}

An {\it independent set} is a subset of vertices such that no two of them
are adjacent. The {\it independence number} $\alpha(G)$ of a graph $G$
is the size of the largest independent set in $G$. In this paper,
we study independent sets in cubic graphs with large girth.
Recall that a graph $G$ is {\em cubic} if every vertex of $G$
has degree three, $G$ is {\em subcubic} if every vertex has degree
at most three, and
the {\em girth} of $G$ is the length of the shortest cycle of $G$.

A {\em fractional coloring} of a graph $G$ is an assignment of weights
to independent sets in $G$ such that for each vertex $v$ of $G$,
the sum of weights of the sets containg $v$ is at least one.
The {\em fractional chromatic number $c_f(G)$} of $G$
is the minimum sum of weights of independent sets forming
a fractional coloring. The main result of this paper asserts that
if $G$ is a cubic graph with sufficiently large girth, then
there exists a probability distribution on its independent sets
such that each vertex is contained in an independent set drawn
based on this distribution with probability at least $\const$.
This implies that every $n$-vertex cubic graph $G$ with large girth
contains an independent set with at least $\const n$ and its fractional
chromatic number is at most $\constfrac$ (to see the latter,
consider the probability distribution and assign each independent
set $I$ in $G$ a weight equal to $c p(I)$ where $p(I)$ is the probability
of $I$ and $c=1/\const$). In addition, our lower
bound on the independence number also translate to random cubic
graphs.

Let us now survey previous results in this area.
Inspired by Ne\v set\v ril's Pentagon Conjecture~\cite{bib-nesetril},
Hatami and Zhu~\cite{bib-hatami} showed that every cubic graph
with sufficiently large girth has fractional chromatic number
at most $8/3$. For the independence number, Hoppen~\cite{bib-hoppen}
showed that every $n$-vertex cubic graph with sufficiently large girth
has independence number at least $\hoppen n$; this bound matches
an earlier bound of Wormald~\cite{bib-wormald}, also independently proven 
by Frieze and Suen~\cite{bib-frieze}, for random cubic graphs.
The bound for random cubic graphs was further improved by Duckworth and
Zito~\cite{bib-duckworth} to $\duckworth n$, providing an improvement
of the bound unchallenged for almost 15 years.

\subsection{Random cubic graphs}

The independence numbers of random cubic graphs and cubic graphs with large girth
are closely related. Here, we consider the model where a random cubic graph
is randomly uniformly chosen from all cubic graphs on $n$ vertices.

Any lower bound on the independence number for the class of cubic graphs 
with large girth is also a lower bound for random cubic graphs.
First observe that a lower bound on the independence number for cubic graphs
with large girth translates to subcubic graphs with large girth:
assume that $H$ is a subcubic graph with $m_1$ vertices of degree one and
$m_2$ vertices of degree two. Now consider an $(2m_1+m_2)$-regular graph
with high girth and replace each vertex of it with copy of $H$ in such a way
that its edges are incident with vertices of degree one and two in the copies.
The obtained graph $G$ is cubic and has large girth; an application
of the lower bound on the independence number for cubic graphs yields that
one of the copies of $H$ contains a large independent set, too.

Since a random cubic graph contains asymptotically almost surely only
$o(n)$ cycles shorter than a fixed integer $g$~\cite{bib-wormald-shortcycles},
any lower bound for cubic graphs with large girth also applies to random
cubic graphs.
Conversely, Hoppen and Wormald~\cite{bib-wormald-private} have recently developed
a technique for translating lower bounds (obtained in a specific but quite general way)
for the independence number of a random cubic graph to cubic graphs with large girth.

In the other direction, any upper bound on the independence number for a~random cubic graph also applies
to cubic graphs with large girth (just remove few short cycles from a random graph).
The currently best upper bound for random cubic graphs with $n$ vertices and
thus for cubic graphs with large girth is $\upper n$ derived by McKay~\cite{bib-mckay}. 
In~\cite{bib-mckay}, McKay mentions that experimental evidence suggests that
almost all $n$-vertex cubic graphs contain independent sets of size $\experiment n$;
newer experiments of McKay~\cite{bib-mckay-private} then suggests that
the lower bound can even be $\experimentnew n$.

\subsection{Maximum cut}

Our results also improve known bounds on the size of the maximum cut in cubic graphs with large girth.
A {\em cut} of a graph $G$ is the set of edges such that the vertices of $G$ can be partitioned
into two subsets $A$ and $B$ and the edges of the cut have one end-vertex in $A$ and the other in $B$.
The {\em maximum cut} is a cut with the largest number of edges.
It is known that if $G$ is an $m$-edge cubic graph with sufficiently large girth,
then its maximum cut has size at least $6m/7-o(m)$~\cite{bib-zyka}. On the other hand,
there exist $m$-edge cubic graphs with large girth with the maximum cut of size at most $\uppercut$.
This result was first announced by McKay~\cite{bib-mckay-cut}, the proof can be found in~\cite{bib-hh}.

If $G$ has an independent set of size $\alpha$, then it also has a cut of size $3\alpha$ (put
the independent set on one side and all the other vertices on the other side). So, if $G$ is a cubic
graph with $n$ vertices and $m=3n/2$ edges and the girth of $G$ is sufficiently large,
then it has a cut of size at least $3\cdot\const n=\constcut m$. Let us remark that
this bound can be further improved to $\constcutnot m$ by considering a randomized
procedure tuned up for producing a cut of large size. Since we focus
on independent sets in cubic graphs with large girth in this paper,
we omit details how the improved bound can be obtained.

\section{Structure of the proof}

Our proof is inspired by the proof of Hoppen from~\cite{bib-hoppen}.
We develop a procedure for obtaining a random independent set in a cubic
graph with large girth. Our main result is thus the following.

\begin{theorem}
\label{thm-distribution}
There exists $g>0$ such that for every cubic graph $G$ with girth at least $g$,
there is a probability distribution such that each vertex is contained in an independent
set drawn according to this distribution with probability at least $\const$.
\end{theorem}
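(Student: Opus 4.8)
The plan is to follow Hoppen's strategy from~\cite{bib-hoppen} of analysing a random greedy-type construction of an independent set, but to realise it as an explicit \emph{local} procedure run for a bounded number of rounds and to track a vertex's fate exactly rather than through a differential-equation approximation. Concretely, I would fix a schedule of $\rounds$ rounds; at the start every vertex is undecided, and in each round every vertex independently draws some fresh random bits (say a uniform real in $[0,1]$), inspects the current states of the vertices in a constant-radius ball around it, and, according to a local rule that may depend on the round index, either tentatively joins the set $I$, becomes permanently excluded (because a neighbour joined $I$), or stays undecided, with conflicts between neighbouring candidates resolved using the drawn reals. The output is the set $I$ of vertices that have joined; it is independent by construction, and its law is the required probability distribution, so the theorem reduces to showing $\Pr[v\in I]\ge\const$ for every vertex $v$.

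The first step is to remove the dependence on $G$. Each round enlarges the ball a vertex's state depends on by only a constant, so after all $\rounds$ rounds the event $\{v\in I\}$ is a function solely of the isomorphism type of the ball $B_R(v)$ of some fixed radius $R=R(\rounds)$ and of the random bits drawn inside it. Choosing $g>2R+1$, for any cubic $G$ of girth at least $g$ the rooted ball $B_R(v)$ is isomorphic to the radius-$R$ ball of the infinite $3$-regular tree $T_3$; since the random bits are i.i.d.\ across vertices, $\Pr[v\in I]$ therefore equals one and the same constant $p=p(\rounds)$, namely the probability that the root of $T_3$ joins the set produced by running the procedure on $T_3$. It thus suffices to lower-bound $p(\rounds)$.

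On the tree the analysis is recursive. Conditioned on the state of a vertex $u$ after round $k$, the subtrees rooted at its children evolve independently, so the joint law of $u$'s state together with the states of the descendants at the depths that still matter is encoded by a finite-dimensional probability vector, and one round of the local rule turns the depth-$k$ vector into the depth-$(k{+}1)$ vector via a fixed polynomial update map built from the random choice at $u$ and the independent combination of the children's vectors. Iterating this map $\rounds$ times produces $p(\rounds)$. The parameters of the local rule — thresholds on the reals, tentative-inclusion probabilities, tie-breaks, and their dependence on the round — are then optimised to make $p$ as large as possible; evaluating the recursion and performing this optimisation is done with computer assistance, and the outcome is $p(\rounds)\ge\const$, which yields $c_f(G)\le 1/\const<\constfrac$ and an independent set of size at least $\const n$ exactly as in the introduction.

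The hardest part, and the reason $\rounds$ must be taken so large, is the joint design of the local rule and the control of this recursion: the state space has to be kept finite so the update map is genuinely finite-dimensional and its iteration is a rigorous computation; the rule must guarantee that $I$ stays independent after every round; and, most delicately, the long-run behaviour of the update map has to be pushed as close as possible to the conjectured optimum, which cannot be achieved by a single greedy pass and demands a carefully staged multi-round schedule. A further technical point is certifying, rather than merely observing, the inequality $p(\rounds)\ge\const$: one must propagate interval/rounding error bounds through the $\rounds$ iterations and check that $R(\rounds)$ is finite, so that a single girth threshold $g$ works uniformly for all cubic graphs.
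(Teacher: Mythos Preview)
Your outline has the right architecture --- local randomized procedure, exact analysis on the $3$-regular tree, transfer to large-girth graphs --- and this is indeed what the paper does. But the sentence ``conditioned on the state of a vertex $u$ after round $k$, the subtrees rooted at its children evolve independently, so the joint law \ldots\ is encoded by a finite-dimensional probability vector'' hides the real content of the proof and, as stated, is not justified. Conditioning on the state of $u$ is conditioning on an event that depends on all of $u$'s neighbours simultaneously, and for a generic local rule this \emph{correlates} the subtrees; there is then no reason the relevant statistics collapse to a fixed finite-dimensional vector rather than to the full joint law on a ball whose radius grows linearly in the number of rounds. The paper's engine is precisely an \emph{Independence Lemma}: conditioned on an edge $uv$ remaining white after $k$ rounds, the colourings of the two half-trees $T_{u,v}$ and $T_{v,u}$ are independent. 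This is not automatic; it is proved by induction and rests on a factorisation property (the probability that $uv$ stays white, given the two half-tree colourings, splits as a product $P_k(u,v,\gamma_u)\,P_k(v,u,\gamma_v)$), and that factorisation is exactly why the procedure is designed the way it is --- independent activation of degree-two vertices, symmetric random choice of path endpoints, etc. With the lemma, the recursion closes over the handful of scalars $q^1_k,q^2_k,q^3_k$ (and derived path-probabilities); without it, your ``finite-dimensional update map'' does not exist.

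A second, smaller point: the paper's procedure is \emph{not} constant-radius per round. In every round after the first it processes entire maximal paths of degree-two white vertices, which can be arbitrarily long, so the event $\{v\in I\}$ after $K$ rounds is not determined by any fixed ball in $G$. Your clean transfer argument ($B_R(v)$ is a tree once $g>2R+1$) therefore does not apply directly. The paper handles this by introducing a truncation parameter $L$: long degree-two paths are cut and capped with ``virtual'' subtrees sampled from the correct tree-conditional distribution, and one shows that the probability a vertex is ever involved in such a truncation is at most $\sum_k 2(q^2_k)^{L-1}\to 0$ as $L\to\infty$. This is what produces the girth threshold and the $r_K-\varepsilon$ conclusion; it is not hard once the Independence Lemma is in hand, but it is a genuine extra step your proposal omits.
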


We describe our procedure for obtaining a random independent set in a cubic
graph in Section~\ref{sect-procedure}. To analyze its performance, we first
focus on its behavior for infinite cubic trees in Section~\ref{sect-tree}.
The core of our analysis is the independence lemma (Lemma~\ref{indep})
which is used to simplify the recurrence relation appearing in the analysis.
We then show that the procedure can be modified for cubic graphs
with sufficiently large girth while keeping its performance.
The actual performance of our randomized procedure is based
on solving the derived recurrences numerically.

As we have explained in Section~\ref{sect-intro},
Theorem~\ref{thm-distribution} has the following three corollaries.

\begin{corollary}
\label{cor-indep}
There exists $g>0$ such that every $n$-vertex subcubic graph with girth at least $g$ 
contains an independent set of size at least $\const n$.
\end{corollary}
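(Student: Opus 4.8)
The plan is to derive the statement from Theorem~\ref{thm-distribution} in two steps: first turn the probability distribution into an actual large independent set, which handles the \emph{cubic} case, and then reduce the general \emph{subcubic} case to it by the gluing argument sketched in Section~\ref{sect-intro}. Let $g$ be the constant provided by Theorem~\ref{thm-distribution}; we prove Corollary~\ref{cor-indep} with this value of $g$. For the first step, let $G$ be a cubic graph on $n$ vertices with girth at least $g$ and let $S$ be the random independent set distributed as in Theorem~\ref{thm-distribution}, so that $\Pr[v\in S]\ge\const$ for every vertex $v$ of $G$. By linearity of expectation, $\mathbf{E}\bigl[\,|S|\,\bigr]=\sum_{v\in V(G)}\Pr[v\in S]\ge\const n$, so some independent set in the support of the distribution has at least $\const n$ vertices; hence $\alpha(G)\ge\const n$ for every cubic graph $G$ of girth at least $g$.

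For the reduction, let $H$ be a subcubic graph on $n$ vertices with girth at least $g$, let $m_i$ be the number of vertices of $H$ of degree $i$ for $i\in\{0,1,2\}$, and set $d=3m_0+2m_1+m_2$. If $d=0$ then $H$ is already cubic and there is nothing to prove, so assume $d\ge 1$. Using the classical fact that for every $d\ge 1$ there exist $d$-regular graphs of arbitrarily large girth, fix such a graph $R$ on $N$ vertices whose girth is at least $g$. Construct a graph $G$ by taking one copy of $H$ for each vertex of $R$ and, for each edge $uv$ of $R$, adding a single edge joining a vertex of the copy at $u$ to a vertex of the copy at $v$, the choice being made so that in the end every vertex of every copy has degree exactly three; this is possible because a vertex of $H$ of degree $i$ must receive $3-i$ further incident edges, so each copy must receive $d=3m_0+2m_1+m_2$ new incident edges in total, which matches the degree of $R$. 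Then $G$ is cubic and has $Nn$ vertices, and it has girth at least $g$. Indeed, a shortest cycle of $G$ lying inside a single copy of $H$ has length at least the girth of $H$, hence at least $g$; and a shortest cycle using at least one inter-copy edge projects, under the contraction of each copy to a single point, to a closed trail of $R$ traversing exactly the inter-copy edges it uses, since distinct inter-copy edges of $G$ correspond to distinct edges of $R$. As a closed trail with at least one edge contains a cycle, such a cycle of $G$ uses at least $\mathrm{girth}(R)\ge g$ inter-copy edges. In every case $G$ is a cubic graph of girth at least $g$.

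Applying the first step to $G$ yields an independent set $I\subseteq V(G)$ with $|I|\ge\const Nn$. Partitioning $I$ according to which of the $N$ copies of $H$ contains each vertex, some copy must contain at least $\const n$ vertices of $I$; since that copy induces a subgraph of $G$ isomorphic to $H$, this gives an independent set of size at least $\const n$ in $H$, which proves the corollary. The only mildly delicate step is the girth bound for $G$, namely checking that the gluing cannot create a short cycle; granted the (standard) existence of regular graphs of arbitrarily large girth, everything else is routine bookkeeping.
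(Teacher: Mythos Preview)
Your proof is correct and follows essentially the same approach as the paper, which derives the corollary from Theorem~\ref{thm-distribution} via linearity of expectation for the cubic case and the gluing construction (sketched in Section~\ref{sect-intro}) for the subcubic case. Your write-up is in fact more careful than the paper's sketch: you explicitly account for isolated vertices in the degree count $d=3m_0+2m_1+m_2$ and you justify the girth bound for the glued graph, neither of which the paper spells out.
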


\begin{corollary}
\label{cor-frac}
There exists $g>0$ such that every cubic graph with girth at least $g$
has the fractional chromatic number at most $\constfrac$.
\end{corollary}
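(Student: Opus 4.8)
The plan is to derive the bound immediately from Theorem~\ref{thm-distribution} by turning the probability distribution it supplies into an explicit fractional coloring of small total weight. Fix the constant $g>0$ from Theorem~\ref{thm-distribution} and let $G$ be an arbitrary cubic graph with girth at least $g$. Since $G$ is finite it has only finitely many independent sets; I would let $p$ be the probability distribution on them provided by the theorem, so that for every vertex $v$ of $G$ one has $\sum_{I\ni v} p(I)\ge\const$, the sum ranging over the independent sets $I$ of $G$ that contain $v$.

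Next I would scale $p$ so that the covering constraint is met exactly in the worst case: set $c=1/\const$ and assign to each independent set $I$ of $G$ the weight $w(I)=c\,p(I)\ge 0$. For every vertex $v$,
\[
\sum_{I\ni v} w(I)=c\sum_{I\ni v}p(I)\ge c\cdot\const=1 ,
\]
so $w$ is a fractional coloring of $G$ in the sense of Section~\ref{sect-intro}. Its total weight equals
\[
\sum_{I} w(I)=c\sum_{I}p(I)=c=\frac1{\const},
\]
because $p$ is a probability distribution, and since $\const\cdot\constfrac>1$ we have $1/\const<\constfrac$. Hence $c_f(G)\le\constfrac$, which is exactly the statement of the corollary.

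There is no genuine obstacle in this argument; the only point that needs a word is that $w$ is an admissible candidate in the minimisation defining $c_f(G)$, which holds because $G$ has finitely many independent sets and the weights $w(I)$ are nonnegative reals. All of the difficulty is already absorbed into Theorem~\ref{thm-distribution}, whose proof (via the randomized procedure of Section~\ref{sect-procedure} and the tree analysis of Section~\ref{sect-tree}) is where the real work lies.
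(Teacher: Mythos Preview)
Your argument is correct and matches the paper's own reasoning: the paper does not give a separate proof of Corollary~\ref{cor-frac} but explains in Section~\ref{sect-intro} exactly the scaling you describe, assigning weight $c\,p(I)$ with $c=1/\const$ to each independent set $I$. Your write-up simply makes this sketch explicit.
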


\begin{corollary}
\label{cor-cut}
There exists $g>0$ such that every cubic graph $G$ with girth at least $g$
has a cut containing at least $\constcut m$ edges where $m$ is the total
number of edges of $G$.
\end{corollary}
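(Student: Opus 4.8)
The plan is to derive Corollary~\ref{cor-cut} directly from Corollary~\ref{cor-indep} (equivalently, from Theorem~\ref{thm-distribution}) by the elementary independent-set-to-cut observation already sketched in Section~\ref{sect-intro}. First I would fix $g>0$ to be exactly the threshold furnished by Corollary~\ref{cor-indep}, and let $G$ be an arbitrary cubic graph with $n$ vertices, $m=3n/2$ edges, and girth at least $g$. Applying Corollary~\ref{cor-indep} to $G$ yields an independent set $I\subseteq V(G)$ with $|I|\ge\const n$.

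Next I would analyze the cut associated with the partition $(I,V(G)\setminus I)$. Because $I$ is independent, every edge incident with a vertex of $I$ has its other end-vertex outside $I$ and therefore belongs to this cut; moreover no such edge is counted twice, since an edge cannot have both end-vertices in $I$. As $G$ is cubic, each vertex of $I$ is incident with exactly three edges, so the cut contains precisely $3|I|$ edges. Finally, using $|I|\ge\const n$ together with $n=2m/3$ gives $3|I|\ge 3\const n=2\const m=\constcut m$, which is the claimed bound.

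The genuine mathematical content is entirely inside Theorem~\ref{thm-distribution}/Corollary~\ref{cor-indep}, so there is no real obstacle in the present deduction; the only points requiring any care are that the girth threshold $g$ is inherited verbatim from Corollary~\ref{cor-indep}, and that the numerical identity $2\const=\constcut$ matches the stated constants. I would also note, but not prove, that the stronger bound $\constcutnot m$ mentioned in Section~\ref{sect-intro} does not follow from this argument: it requires a separate randomized procedure tailored to producing large cuts rather than large independent sets, and is deliberately left out of scope.
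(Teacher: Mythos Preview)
Your proposal is correct and follows exactly the argument the paper gives in Section~\ref{sect-intro}: take the independent set $I$ of size at least $\const n$ guaranteed by Corollary~\ref{cor-indep}, form the cut $(I,V(G)\setminus I)$, and use that in a cubic graph this cut has $3|I|\ge 3\cdot\const n=\constcut m$ edges. Your remark that the improved bound $\constcutnot m$ requires a different procedure also matches the paper's comment.
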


\section{Description of the randomized procedure}
\label{sect-procedure}

The procedure for creating a random independent set
is parametrized by three numbers:
the number $K$ of its rounds and probabilites $p_1$ and $p_2$.

Throughout the procedure,
vertices of the input graph have one of the three colors: white, blue and red.
At the beginning, all vertices are white.
In each round, some of the white vertices are recolored in such a way that
{\em red} vertices always form an independent set and
all vertices adjacent to red vertices as well as some of other vertices (see details below)
are {\em blue}. All other vertices of the input graph are {\em white}.
Red and blue vertices are never again recolored during the procedure.
Because of this,
when we talk of a {\em degree} of a vertex, we mean the number of its white neighbors.
Observe that the neighbors of a white vertex that are not white must be blue.

The first round of the procedure is special and differ from the other rounds.
As we have already said, at the very beginning, all vertices of the input graph
are white. In the first round, we randomly and independently with probability $p_1$
mark some vertices as active. Active vertices with no active neighbor become red and
vertices with at least one active neighbor become blue. In particular, if two adjacent
vertices are active, they both become blue (as well as their neighbors).
At this point, we should note that the probabilty $p_1$ will be very small.

In the second and the remaining rounds,
we first color all white vertices of degree zero by red.
We then consider all paths formed by white vertices
with end vertices of degree one or three and with all inner vertices of degree two.
Based on the degrees of their end vertices, we refer to these paths as paths of type
$1\lra1$, $1\lra3$ or $3\lra3$.
Note that these paths can have no inner vertices.
Each vertex of degree two is now activated with probability $p_2$ independently of
all other vertices.

For each path of type $1\lra3$, we color the end vertex of degree one with red and
we then color all the inner vertices with red and blue in the alternating way.
If the neighbor of the end vertex of degree three becomes red,
we also color the end vertex of degree three with blue.
In other words, we color the end vertex of degree three by blue
if the path has an odd length.
For a path of type $1\lra1$, we choose randomly one of its end vertices,
color it red and color all the remaining vertices of the path with red and blue
in the alternating way.
We refer to the choosen end vertex as the beginning of this path.
Note that the facts whether and which vertices of degree two on the paths of type $1\lra1$ or $1\lra3$
are activated do not influence the above procedure.

Paths of type $3\lra3$ are processed as follows. A path of type $3\lra3$
becomes active if at least one of its inner vertices is activated, i.e., a path of length $\ell$
is active with probability $1-(1-p_2)^{\ell-1}$. Note that paths with no inner vertices,
i.e., edges between two vertices of degree three, are never active.
For each active path, flip the fair coin to select one of its end vertices of degree three,
color this vertex by blue and its neighbor on the path by red.
The remaining inner vertices of the path are colored with red and blue in the alternating way.
Again we refer to the choosen end vertex as the beginning of this path.
The other end vertex of degree three is colored blue if its neighbor on the path becomes red,
i.e., if the path has an even length.

Note that a vertex that has degree three at the beginning of the round cannot become red during the round
but it can become blue because of several different paths ending at it.

\section{Analysis for infinite cubic trees}
\label{sect-tree}

Let us start with introducing some notation used in the analysis.
For any edge $uv$ of a cubic graph $G$, $T_{u,v}$ is the component of $G-v$ containing
the vertex $u$; we sometimes refer to $u$ as to the root of $T_{u,v}$.
 If $G$ is the infinite cubic tree, then it is a union of
$T_{u,v}, T_{v,u}$ and the edge $uv$. The subgraph of $T_{u,v}$ induced by vertices
at distance at most $d$ from $u$ is denoted by $T_{u,v}^d$.
Observe that if the girth of $G$ is larger than $2d+1$,
all the subgraphs $T_{u,v}^d$ are isomorphic to the same rooted tree $\T^d$ of depth $d$.
The infinite rooted tree with the root of degree two and all inner vertices of degree three will be denoted as $T^\infty$.

Since any automorphism of the cubic tree yields an automorphism of the probability space of
the vertex colorings constructed by our procedure,
the probability that a vertex $u$ has a fixed color after $k$ rounds
does not depend on the choice of $u$.
Hence, we can use $w_k$, $b_k$ and $r_k$ to denote a probability that a fixed
vertex of the infinite cubic tree is white, blue and red, respectively, after $k$ rounds.
Similarly, $w^i_k$ denotes the probability that a fixed vertex has degree $i$ after $k$ rounds
conditioned by the event that it is white after $k$ rounds, i.e., the probability that
a fixed vertex white and has degree two after $k$ rounds is $w_k\cdot w^2_k$.

The sets of white, blue and red vertices after $k$ rounds of the randomized procedure
described in Section~\ref{sect-procedure} will be denoted by $W_k$, $B_k$ and $R_k$,
respectively. Similarly, $W^i_k$ denotes the set of white vertices with degree $i$
after $k$ rounds, i.e., $W_k=W^0_k \cup W^1_k\cup W^2_k\cup W^3_k$.
Finally, $c_k\left(T_{u,v}\right)$ denotes the coloring of $T_{u,v}$ and
$c_k\left(T_{u,v}^d\right)$ the coloring of $T_{u,v}^d$ after $k$ rounds.
The set $\C^d_k$ will consist of all possible colorings $\gamma$ of~$\T^d$
such that the probability of $c_k\left(T_{u,v}^d\right)=\gamma$ is non-zero
in the infinite cubic tree (note that this probability does not depend on the edge $uv$) and
such that the root of $\T^d$ is colored white.
We extend this notation and use $\C^\infty_k$ to denote all such colorings of~$\T^\infty$. 

Since the infinite cubic tree is strongly edge-transitive, we conclude
that the probability that $T_{u,v}$ has a given coloring from $\C^\infty_k$ after $k$ rounds
does not depend on the choice of $uv$. Similarly, the probability that both $u$ and $v$
are white after $k$ rounds does not depend on the choice of the edge $uv$. To simplify
our notation, this event is denoted by $uv\subseteq W_k$.
Finally, the probability that $u$ has degree $i\in\{1,2,3\}$ after $k$ rounds conditioned by $uv\subseteq W_k$
does also not depend on the choice of the edge $uv$. This probability is denoted by $q^i_k$.

\subsection{Independence lemma}

We now show that the probability that both the vertices of an edge $uv$ are white after $k$ rounds
can be computed as a product of two probabilities. This will be crucial in the proof of the Independence Lemma.
To be able to state the next lemma, we need to introduce additional notation. The probability
$P_k(u,v,\gamma)$ for $\gamma\in\C^\infty_{k-1}$ is the probability that
$$\Pr\bigl[u \; \mbox{\rm stays white regarding} \; T_{u,v} \cond c_{k-1}\left(T_{u,v}\right)=\gamma \land v \in W_{k-1}\bigr]$$
where the phrase ``$u$ stays white regarding $T_{u,v}$'' means
\begin{itemize}
\item if $k=1$, that neither $u$ nor a neighbor of it in $T_{u,v}$ is active, and
\item if $k>1$, that neither
  \begin{itemize}
  \item $u$ has degree one after $k-1$ rounds,
  \item $u$ has degree two after $k-1$ rounds and the path formed by vertices of degree two in $T_{u,v}$ ends at a vertex of degree one,
  \item $u$ has degree two after $k-1$ rounds, the path formed by vertices of degree two in $T_{u,v}$ ends at a vertex of degree three and at least one of the vertices of degree two on this path is activated, nor
  \item $u$ has degree three after $k-1$ rounds and is colored blue because of a path of type $1\lra3$ or $3\lra 3$
        fully contained in $T_{u,v}$.
  \end{itemize}
\end{itemize}
Informally, this phrase represents that there is no reason for $u$ not to stay white
based on the coloring of $T_{u,v}$ and vertices in $T_{u,v}$ activated in the $k$-th round.

\begin{lemma}
\label{claim}
Consider the randomized procedure for the infinite cubic tree.
Let $k$ be an integer, $uv$ an edge of the tree and
$\gamma_u$ and $\gamma_v$ two colorings from $\C^\infty_{k-1}$.
The probability
\begin{equation}
\label{eq-claim}
\Pr\bigl[uv \subseteq W_k \cond c_{k-1}\left(T_{u,v}\right)=\gamma_u \land c_{k-1}\left(T_{v,u}\right)=\gamma_v\bigr]\;\mbox{,}
\end{equation}
i.e., the probability that both $u$ and $v$ are white after $k$ rounds conditioned by $c_{k-1}\left(T_{u,v}\right)=\gamma_u$ and
$c_{k-1}\left(T_{v,u}\right)=\gamma_v$, is equal to
$$P_{k}(u,v,\gamma_u)\cdot P_{k}(v,u,\gamma_v)\;\mbox{.}$$
\end{lemma}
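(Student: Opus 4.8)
The plan is to argue conditionally on the event $c_{k-1}(T_{u,v})=\gamma_u\land c_{k-1}(T_{v,u})=\gamma_v$ throughout, and to split the statement into two claims. The conditioning is legitimate: each of $\gamma_u,\gamma_v$ lies in $\C^\infty_{k-1}$, so each has positive marginal probability, and their conjunction has positive probability because in the infinite cubic tree $T_{u,v}$ and $T_{v,u}$ meet only in the edge $uv$, so an assignment of activations and coin flips realising $\gamma_u$ on the $u$-side may be glued to one realising $\gamma_v$ on the $v$-side. Since $\gamma_u,\gamma_v$ have white roots, under this conditioning $u,v\in W_{k-1}$. The first claim is an identity of events: conditionally on $\gamma_u$ and $\gamma_v$, the event $uv\subseteq W_k$ coincides with $E_u\cap E_v$, where $E_u$ is ``$u$ stays white regarding $T_{u,v}$'' and $E_v$ is ``$v$ stays white regarding $T_{v,u}$''. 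The second claim is that $E_u$ and $E_v$ are governed by disjoint parts of the round-$k$ randomness: once we condition on $\gamma_u$ and on $v\in W_{k-1}$ (forced by $\gamma_v$), $E_u$ becomes a fixed function of the round-$k$ activations of the vertices of $T_{u,v}$ and the round-$k$ beginning-selection coins of the paths lying entirely inside $T_{u,v}$, and symmetrically for $E_v$. Granting these, the lemma follows formally: the conditioning fixes only events of rounds $1,\dots,k-1$, which are independent of the randomness of round $k$, so $E_u$ and $E_v$ are conditionally independent, and the probability in \eqref{eq-claim} equals $\Pr[E_u\mid\gamma_u,\gamma_v]\cdot\Pr[E_v\mid\gamma_u,\gamma_v]$; moreover, since $E_u$ is a fixed function of round-$k$ randomness inside $T_{u,v}$ once $\gamma_u$ and $v\in W_{k-1}$ are fixed, $\Pr[E_u\mid\gamma_u,\gamma_v]=\Pr\bigl[E_u\mid c_{k-1}(T_{u,v})=\gamma_u\land v\in W_{k-1}\bigr]=P_k(u,v,\gamma_u)$, and likewise $\Pr[E_v\mid\gamma_u,\gamma_v]=P_k(v,u,\gamma_v)$.

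For $k=1$ both claims are immediate: $\C^\infty_0$ contains only the all-white coloring, so the conditioning is vacuous; a vertex is white after the first round exactly when neither it nor any of its three neighbours is active, hence $uv\subseteq W_1$ holds iff none of the six distinct vertices $u$, $v$, the two $T_{u,v}$-neighbours of $u$ and the two $T_{v,u}$-neighbours of $v$ is active. This event is the intersection of ``$u$ and its two $T_{u,v}$-neighbours are inactive'' and ``$v$ and its two $T_{v,u}$-neighbours are inactive'', i.e.\ $E_u\cap E_v$, and these are independent events on disjoint vertex triples; taking probabilities gives $(1-p_1)^3\cdot(1-p_1)^3=P_1(u,v,\gamma_u)\cdot P_1(v,u,\gamma_v)$.

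For $k\ge2$ the content is a case analysis on the degrees of $u$ and $v$ after $k-1$ rounds. Let $d_u$ (resp.\ $d_v$) be the number of white neighbours of $u$ in $T_{u,v}$ (resp.\ of $v$ in $T_{v,u}$); these are read off from $\gamma_u,\gamma_v$, and since $u,v\in W_{k-1}$ the true degrees of $u$ and $v$ are $d_u+1$ and $d_v+1$. If $d_u=0$, or if $d_u=1$ and the path of white degree-two vertices leaving $u$ inside $T_{u,v}$ ends at a vertex of degree one (and symmetrically for $v$), then $u$ is recoloured in round $k$ with certainty — a white vertex of degree at most one is always recoloured, and so is every inner vertex of a path of type $1\lra1$ or $1\lra3$ — so $uv\subseteq W_k$ is impossible; in precisely these configurations the corresponding clause in the definition of ``$u$ stays white regarding $T_{u,v}$'' holds under the conditioning, so $P_k(u,v,\gamma_u)=0$, and both sides of the claimed identity vanish. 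In the remaining configurations $d_u,d_v\in\{1,2\}$ and each degree-two endpoint has its relevant path of degree-two vertices ending at a degree-three vertex; there are three cases. If $d_u=d_v=2$, the edge $uv$ is a degenerate $3\lra3$ path with no inner vertex, hence never active and harmless, so $u$ stays white iff it is not blued by a $1\lra3$ or $3\lra3$ path lying inside $T_{u,v}$, which is exactly $E_u$, and likewise for $v$. If $d_u=1$ and $d_v=2$ (or the mirror image), $u$ lies on a $3\lra3$ path whose endpoints are $v$ and a degree-three vertex of $T_{u,v}$ and whose inner (degree-two) vertices all lie in $T_{u,v}$, so $u$ stays white iff this path is inactive, which is $E_u$; given that it is inactive, $v$ stays white iff it is not blued by a path inside $T_{v,u}$, which is $E_v$, while if the path is active then $u$ is recoloured and $E_u$ fails — so in every sub-case $uv\subseteq W_k=E_u\cap E_v$. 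If $d_u=d_v=1$, then $u$ and $v$ lie on a common $3\lra3$ path with one endpoint in $T_{u,v}$ and the other in $T_{v,u}$, whose inner vertices split into those in $T_{u,v}$ and those in $T_{v,u}$; here $uv\subseteq W_k$ holds iff this path is inactive, i.e.\ iff no inner vertex on the $T_{u,v}$-side and none on the $T_{v,u}$-side is activated, which is exactly $E_u\cap E_v$. In each case the only round-$k$ coin not assigned to one of the two subtrees is the beginning-selection coin of the path crossing $uv$, and under the conditioning this coin affects neither $E_u$, nor $E_v$, nor the event $uv\subseteq W_k$, so the disjointness required by the second claim holds.

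The step I expect to be the main obstacle is establishing the event identity $uv\subseteq W_k=E_u\cap E_v$ in the presence of a path crossing the edge $uv$, and in particular the case $d_u=d_v=1$: a priori such a path couples the two subtrees, and the crux is the observation that the only crossing path capable of leaving both $u$ and $v$ white is a $3\lra3$ path, for which ``the path is inactive'' is the conjunction of an event about the activations in $T_{u,v}$ and an event about the activations in $T_{v,u}$ — exactly the split recorded by the two occurrences of the ``degree two $\dots$ and at least one of the vertices of degree two on this path is activated'' clause in the definition of $P_k$. Everything else is a finite, routine inspection of the path types described in Section~\ref{sect-procedure} together with the standard conditional-independence manipulation sketched above.
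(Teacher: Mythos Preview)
Your proof is correct and follows essentially the same approach as the paper: a case analysis on the degrees of $u$ and $v$ after round $k-1$, handling separately the degenerate cases where one side forces recoloring and both sides vanish, the case where both have degree three, and the case of a $3\lra3$ path crossing $uv$ whose inactivity factors as a product over the two subtrees. Your framing via the event identity $\{uv\subseteq W_k\}=E_u\cap E_v$ and disjoint round-$k$ randomness makes the independence structure more explicit than the paper's direct case-by-case probability computation (e.g.\ writing $(1-p_2)^{\ell-1}=(1-p_2)^{\ell_1}(1-p_2)^{\ell_2}$), but the content and the case split are the same.
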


\begin{proof}
We distinguish the cases $k=1$ and $k>1$.
If $k=1$, $\C^\infty_0$ contains a single coloring $\gamma_0$ where all vertices are white.
Hence, the probability (\ref{eq-claim}) is
$\Pr\bigl[uv \subseteq W_1 \cond c_{k-1}\left(T_{u,v}\right)=\gamma_0 \land c_{k-1}\left(T_{v,u}\right)=\gamma_0\bigr]$ and
it is equal to $\Pr\bigl[uv\subseteq W_1\bigr]=(1-p_1)^6$.
On the other hand, $P_1(u,v,\gamma_0)=P_1(v,u,\gamma_0)=(1-p_1)^3$.
The assertion of the lemma follows.

Suppose that $k>1$. Note that the colorings $\gamma_u$ and $\gamma_v$
completely determine the coloring after $k-1$ rounds.
If $u$ has degree one after $k-1$ rounds,
then (\ref{eq-claim}) is zero as well as $P_k(u,v,\gamma_u)=0$.
If $u$ has degree two and lie on a path of type $1\lra1$ or $1\lra3$,
then (\ref{eq-claim}) is zero and $P_k(u,v,\gamma_u)=0$ or $P_k(v,u,\gamma_v)=0$
depending which of the trees $T_{u,v}$ and $T_{v,u}$ contains the vertex of degree one.
If $u$ has degree two and lie on a path of type $3\lra3$ of length $\ell$,
then (\ref{eq-claim}) is equal to $(1-p_2)^{\ell-1}$.
Let $\ell_1$ and $\ell_2$ be the number of vertices of degree two on this path
in $T_{u,v}$ and $T_{v,u}$, respectively. Observe that $\ell_1+\ell_2=\ell-1$.
Since $P_k(u,v,\gamma_u)=(1-p_2)^{\ell_1}$ and $P_k(v,u,\gamma_v)=(1-p_2)^{\ell_2}$,
the claimed equality holds.

Hence, we can now assume that the degree of $u$ is three. Similarly, the degree of $v$ is three.
Note that $u$ can only become blue and only because of an active path of type $3\lra3$ ending at $u$.
This happens with probability $1-P_k(u,v,\gamma_u)$. Similarly, $v$ becomes blue with probability $1-P_k(v,u,\gamma_v)$.
Since the event that $u$ becomes blue and $v$ becomes blue conditioned by $c_{k-1}\left(T_{u,v}\right)=\gamma_u$ and
$c_{k-1}\left(T_{v,u}\right)=\gamma_v$ are independent, it follows that (\ref{eq-claim})
is also equal to $P_{k}(u,v,\gamma_u)\cdot P_{k}(v,u,\gamma_v)$ in this case.
\end{proof}

Lemma~\ref{claim} plays a crucial role in the Independence Lemma, which we now prove.
Its proof enlights how we designed our randomized procedure.

\begin{lemma}[Independence Lemma]
\label{indep}
Consider the randomized procedure for the infinite cubic tree.
Let $k$ be an integer, $uv$ an edge of the tree and
$\Gamma_u$ and $\Gamma_v$ two measurable subsets of $\C^\infty_{k-1}$.
Conditioned by the event $uv\subseteq W_{k}$,
the events that $c_{k}\left(T_{u,v}\right)\in\Gamma_u$ and $c_k\left(T_{v,u}\right)\in\Gamma_v$ are independent.
In other words,
$$\Pr\bigl[c_{k}\left(T_{u,v}\right)\in\Gamma_u \cond uv \subseteq~W_{k} \bigr]=
  \Pr\bigl[c_{k}\left(T_{u,v}\right)\in\Gamma_u \cond uv \subseteq~W_{k} \land c_k\left(T_{v,u}\right)\in\Gamma_v\bigr]\;\mbox{.}$$
\end{lemma}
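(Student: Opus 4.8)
The plan is to condition on the colorings $\gamma_u\in\C^\infty_{k-1}$ of $T_{u,v}$ and $\gamma_v\in\C^\infty_{k-1}$ of $T_{v,u}$ after $k-1$ rounds, and exploit two facts: first, the joint distribution of $(\gamma_u,\gamma_v)$ after $k-1$ rounds is a product distribution (the colorings of the two sides evolve independently, since at each round the recoloring of a vertex depends only on its own subtree -- this is essentially the content built into the setup, and is also what makes Lemma~\ref{claim} meaningful); second, given $\gamma_u$ and $\gamma_v$, the randomness used in round $k$ to recolor vertices of $T_{u,v}$ is independent of the randomness used to recolor vertices of $T_{v,u}$, \emph{except} for the one place where the two sides interact, namely the unique path of white degree-two vertices (if any) that crosses the edge $uv$ in the case that both $u$ and $v$ have degree three and lie on a common $3\lra3$ path, or more precisely the event that determines whether $u$ (resp.\ $v$) stays white.

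\textbf{Step 1: reduce to the conditional statement.} First I would write
$$\Pr\bigl[c_k(T_{u,v})\in\Gamma_u \land c_k(T_{v,u})\in\Gamma_v \cond uv\subseteq W_k\bigr]
 = \frac{\Pr\bigl[c_k(T_{u,v})\in\Gamma_u \land c_k(T_{v,u})\in\Gamma_v \land uv\subseteq W_k\bigr]}{\Pr[uv\subseteq W_k]}$$
and expand the numerator by summing (integrating) over $\gamma_u,\gamma_v\in\C^\infty_{k-1}$. Using that after $k-1$ rounds the two sides are distributed independently, $\Pr[c_{k-1}(T_{u,v})=\gamma_u\land c_{k-1}(T_{v,u})=\gamma_v]=\Pr[c_{k-1}(T_{u,v})=\gamma_u]\cdot\Pr[c_{k-1}(T_{v,u})=\gamma_v]$.

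\textbf{Step 2: factor each summand.} Fix $\gamma_u,\gamma_v$. I claim
$$\Pr\bigl[c_k(T_{u,v})\in\Gamma_u \land c_k(T_{v,u})\in\Gamma_v \land uv\subseteq W_k \cond c_{k-1}(T_{u,v})=\gamma_u\land c_{k-1}(T_{v,u})=\gamma_v\bigr]$$
factors as a product of a $T_{u,v}$-term and a $T_{v,u}$-term. The event ``$c_k(T_{u,v})\in\Gamma_u$ and $u$ stays white regarding $T_{u,v}$'' depends only on $\gamma_u$ and on coin flips/activations inside $T_{u,v}$ during round $k$; call its conditional probability $A_u(\gamma_u)$. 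Note $A_u(\gamma_u)\le P_k(u,v,\gamma_u)$ since staying white regarding $T_{u,v}$ is implied. Symmetrically define $A_v(\gamma_v)$. The subtle point -- which Lemma~\ref{claim} is exactly designed to handle -- is that ``$uv\subseteq W_k$'' is \emph{not} literally ``$u$ stays white regarding $T_{u,v}$ and $v$ stays white regarding $T_{v,u}$'': when $u$ and $v$ share a common $3\lra3$ path, whether $u$ becomes blue and whether $v$ becomes blue are governed by which end of that path is selected and which inner vertex is activated. However, Lemma~\ref{claim} tells us that conditionally on $(\gamma_u,\gamma_v)$, the events $\{u\in W_k\}$ and $\{v\in W_k\}$ are independent, with probabilities $P_k(u,v,\gamma_u)$ and $P_k(v,u,\gamma_v)$ respectively, and moreover $\{u\in W_k\}$ is determined by $T_{u,v}$-randomness and $\{v\in W_k\}$ by $T_{v,u}$-randomness (this is exactly the decomposition $\ell_1+\ell_2=\ell-1$ in that proof). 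Hence the full conditional probability factors as $A_u(\gamma_u)\cdot A_v(\gamma_v)$, where now $A_u(\gamma_u)=\Pr[c_k(T_{u,v})\in\Gamma_u\land u\in W_k\cond \gamma_u,\gamma_v]$ depends only on $\gamma_u$, and likewise for $A_v$.

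\textbf{Step 3: assemble.} Plugging back, the numerator becomes $\bigl(\sum_{\gamma_u}\Pr[\gamma_u]A_u(\gamma_u)\bigr)\cdot\bigl(\sum_{\gamma_v}\Pr[\gamma_v]A_v(\gamma_v)\bigr)$, which equals $\Pr[c_k(T_{u,v})\in\Gamma_u\land uv\subseteq W_k]\cdot\Pr[c_k(T_{v,u})\in\Gamma_v\land uv\subseteq W_k]/\Pr[uv\subseteq W_k]$ after using Lemma~\ref{claim} once more to see that $\Pr[uv\subseteq W_k]=\sum_{\gamma_u}\Pr[\gamma_u]P_k(u,v,\gamma_u)\cdot\sum_{\gamma_v}\Pr[\gamma_v]P_k(v,u,\gamma_v)$ also factors (so that $\Pr[uv\subseteq W_k]=\Pr[u\in W_k\,\text{via}\,T_{u,v}\cap\{uv\subseteq W_k\}]$ behaves well). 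Dividing by $\Pr[uv\subseteq W_k]$ gives
$$\Pr[c_k(T_{u,v})\in\Gamma_u\land c_k(T_{v,u})\in\Gamma_v\cond uv\subseteq W_k]
= \Pr[c_k(T_{u,v})\in\Gamma_u\cond uv\subseteq W_k]\cdot\Pr[c_k(T_{v,u})\in\Gamma_v\cond uv\subseteq W_k],$$
which is the asserted independence; the displayed conditional-probability reformulation in the statement follows immediately. \textbf{The main obstacle} is Step~2: being careful that ``$uv\subseteq W_k$'' correlates the two sides through the shared crossing path, and that the right way to decouple it is not naive independence of the round-$k$ coin flips but the already-proved Lemma~\ref{claim}, together with the observation that the event $\{u\in W_k\}$ is measurable with respect to $T_{u,v}$'s history and round-$k$ randomness alone. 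Once that measurability-plus-independence is pinned down, everything else is bookkeeping with the product structure.
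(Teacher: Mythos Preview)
Your overall architecture---expand over the round-$(k{-}1)$ colorings $\gamma_u,\gamma_v$, use Lemma~\ref{claim} to factor the conditional probability of $\{uv\subseteq W_k\}$, then regroup into a product---is the same as the paper's. However, Step~1 contains a genuine gap.

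You assert that ``after $k-1$ rounds the two sides are distributed independently,'' writing $\Pr[c_{k-1}(T_{u,v})=\gamma_u\land c_{k-1}(T_{v,u})=\gamma_v]=\Pr[c_{k-1}(T_{u,v})=\gamma_u]\cdot\Pr[c_{k-1}(T_{v,u})=\gamma_v]$ and attributing this to being ``built into the setup.'' This is false unconditionally. Already after round~1 the color of $u$ (which is part of $c_1(T_{u,v})$) depends on whether $v$ was activated, so $c_1(T_{u,v})$ and $c_1(T_{v,u})$ are correlated. What \emph{is} true---and what the paper uses---is that \emph{conditionally on} $uv\subseteq W_{k-1}$ the colorings $c_{k-1}(T_{u,v})$ and $c_{k-1}(T_{v,u})$ are independent. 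But that is exactly the Independence Lemma at step $k-1$: it is not a free fact about the procedure, it is the induction hypothesis. The paper accordingly proves the lemma by induction on $k$, rewrites both sides as ratios conditioned on $uv\subseteq W_{k-1}$, and invokes the inductive statement to split the double integral over $(\gamma_u',\gamma_v')$ into a product. Your argument never sets up an induction, so the product decomposition in Step~1 is unjustified.

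A secondary imprecision sits in Step~2: you claim ``$\{u\in W_k\}$ is determined by $T_{u,v}$-randomness.'' That is not literally correct---when $u$ has degree two and lies on a $3\lra3$ path crossing the edge $uv$, whether $u$ stays white depends on activations on both sides. What Lemma~\ref{claim} actually provides is that, given $(\gamma_u,\gamma_v)$, the event $\{uv\subseteq W_k\}$ coincides with an intersection $E_u\cap E_v$ where $E_u$ is ``$u$ stays white \emph{regarding} $T_{u,v}$'' (a $T_{u,v}$-measurable event) and $E_v$ is the symmetric event; this suffices for the factorization you want, but it is not the same as $\{u\in W_k\}$ being $T_{u,v}$-measurable. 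Once you repair Step~1 by inserting the induction on $k$ and conditioning on $uv\subseteq W_{k-1}$, and tighten the measurability language in Step~2, your argument becomes the paper's.
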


\begin{proof}
The proof proceeds by induction on $k$.
If $k=1$, the event $uv\subseteq W_{1}$ implies that neither $u$, $v$ nor their neighbors
is active during the first round. Conditioned by this, the other vertices of the infinite
cubic tree marked active with probability $p_1$ randomly and independently. The result
of the marking in $T_{u,v}$ fully determine the coloring of the vertices of $T_{u,v}$ and
is independent of the marking and coloring of $T_{v,u}$. Hence, the claim follows.

Assume that $k>1$. Fix subsets $\Gamma_u$ and $\Gamma_v$.
We aim at showing that the probabilities
\begin{equation}
\label{eq-a}
\Pr\bigl[c_k\left(T_{u,v}\right)\in\Gamma_u \cond uv \subseteq W_k \bigr]
\end{equation}
and
\begin{equation}
\label{eq-b}
\Pr\bigl[c_k\left(T_{u,v}\right)\in\Gamma_u \cond u \in W_k \land c_k\left(T_{v,u}\right) \in \Gamma_v \bigr]
\end{equation}
are equal.

The definition of the conditional probability and the fact that $\Pr[uv\subseteq W_{k-1}|uv \subseteq W_k]=1$
yield that (\ref{eq-a}) is equal to
\begin{equation}
\frac{\Pr\bigl[c_k\left(T_{u,v}\right)\in\Gamma_u \land v \in W_k \cond uv \subseteq W_{k-1} \bigr]}
     {\Pr\bigl[uv \subseteq W_k \cond uv \subseteq W_{k-1} \bigr]}\;\mbox{.}
\label{eq-aa}
\end{equation}
By the induction,
for any two subsets $\Gamma_u'$ and $\Gamma_v'$ from $C^\infty_{k-1}$,
the probabilities
$\Pr\bigl[c_{k-1}\left(T_{v,u}\right)\in\Gamma_v' \cond u \in W_{k-1} \land c_{k-1}\left(T_{u,v}\right) \in \Gamma_u' \bigr]$
and
$\Pr\bigl[c_{k-1}\left(T_{v,u}\right)\in\Gamma_v' \cond uv \subseteq W_{k-1} \bigr]$
are the same. Hence, the numerator of (\ref{eq-aa}) is equal to the following:
\begin{align*}
\int_{\gamma_u', \gamma_v' \in C^\infty_{k-1}} &
\Pr\bigl[uv \subseteq W_k \cond c_{k-1}\left(T_{u,v}\right)=\gamma_u' \land c_{k-1}\left(T_{v,u}\right)=\gamma_v'\bigr]\times
\\*
&\Pr\bigl[c_k\left(T_{u,v}\right)\in\Gamma_u \cond uv \subseteq W_k \land c_{k-1}\left(T_{u,v}\right)=\gamma_u'  \land c_{k-1}\left(T_{v,u}\right)=\gamma_v' \bigr]
\\*
& \dint\Pr\bigl[c_{k-1}\left(T_{u,v}\right)=\gamma_u' \cond uv \subseteq W_{k-1} \bigr]
  \dint\Pr\bigl[c_{k-1}\left(T_{v,u}\right)=\gamma_v' \cond uv \subseteq W_{k-1} \bigr]
\mbox{.}
\end{align*}
Observe that when conditioning by $uv\subseteq W_k$,
the event $c_k\left(T_{u,v}\right)\in\Gamma_u$ is independent of $c_{k-1}\left(T_{v,u}\right)=\gamma_v'$.
Hence, the double sum can be rewritten to
\begin{align*}
\int_{\gamma_u', \gamma_v' \in C^\infty_{k-1}} &
\Pr\bigl[uv \subseteq W_k \cond c_{k-1}\left(T_{u,v}\right)=\gamma_u' \land c_{k-1}\left(T_{v,u}\right)=\gamma_v'\bigr] \times
\\*
& \Pr\bigl[c_k\left(T_{u,v}\right)\in\Gamma_u \cond uv \subseteq W_k \land c_{k-1}\left(T_{u,v}\right)=\gamma_u' \bigr]
\\*
& \dint\Pr\bigl[c_{k-1}\left(T_{u,v}\right)=\gamma_u' \cond uv \subseteq W_{k-1} \bigr]
  \dint\Pr\bigl[c_{k-1}\left(T_{v,u}\right)=\gamma_v' \cond uv \subseteq W_{k-1} \bigr]
\; \mbox{.}
\end{align*}
An application of Lemma~\ref{claim} then yields that the double sum is equal to
\begin{align*}
\int_{\gamma_u', \gamma_v' \in C^\infty_{k-1}} &
P_k(u,v,\gamma_u') \cdot P_k(v,u,\gamma_v') \cdot
\Pr\bigl[c_k\left(T_{u,v}\right)=\gamma_u \cond uv \subseteq W_k \land c_{k-1}\left(T_{u,v}\right)=\gamma_u' \bigr]
\\*
& \dint\Pr\bigl[c_{k-1}\left(T_{u,v}\right)=\gamma_u' \cond uv \subseteq W_{k-1} \bigr]
  \dint\Pr\bigl[c_{k-1}\left(T_{v,u}\right)=\gamma_v' \cond uv \subseteq W_{k-1} \bigr]
\; \mbox{.}
\end{align*}
Regrouping the terms containing $\gamma_u'$ only and $\gamma_v'$ only, we obtain that
the numerator of (\ref{eq-aa}) is equal to
\begin{align*}
 \biggl( \int_{\gamma_u' \in C^\infty_{k-1}} &
 P_k(u,v,\gamma_u) \times
 \Pr\bigl[c_k\left(T_{u,v}\right)\in\Gamma_u \cond uv \subseteq W_k \land c_{k-1}\left(T_{u,v}\right)=\gamma_u' \bigr]
\\*
&
 \dint\Pr\bigl[c_{k-1}\left(T_{u,v}\right)=\gamma_u' \cond uv \subseteq W_{k-1} \bigr]\biggr) \times
\\*
\biggl( \int_{\gamma_v' \in C^\infty_{k-1}} &
P_k(v,u,\gamma_v)
\dint\Pr\bigl[c_{k-1}\left(T_{v,u}\right)=\gamma_v' \cond uv \subseteq W_{k-1} \bigr]
 \biggr)
\; \mbox{.}
\end{align*}
Along the same lines, the denominator of~$(\ref{eq-aa})$ can be expressed as
\begin{align*}
 \biggl( \int_{\gamma_u' \in C^\infty_{k-1}} &
 P_k(u,v,\gamma_u) \dint\Pr\bigl[c_{k-1}\left(T_{u,v}\right)=\gamma_u' \cond uv \subseteq W_{k-1} \bigr]\biggr) \times
\\*
\biggl( \int_{\gamma_v' \in C^\infty_{k-1}} &
P_k(v,u,\gamma_v)
\dint\Pr\bigl[c_{k-1}\left(T_{v,u}\right)=\gamma_v' \cond uv \subseteq W_{k-1} \bigr]
 \biggr)
\; \mbox{.}
\end{align*}
Cancelling out the integral over $\gamma_v'$ which is the same in the numerator and the denominator of $(\ref{eq-aa})$,
we obtain that $(\ref{eq-a})$ is equal to
{\scriptsize
\begin{equation}
\frac{\int_{\gamma_u' \in C^\infty_{k-1}}
      P_k(u,v,\gamma_u')
      \Pr\bigl[c_k\left(T_{u,v}\right)\in\Gamma_u \cond uv \subseteq W_k \land c_{k-1}\left(T_{u,v}\right)=\gamma_u' \bigr]
      \dint\Pr\bigl[c_{k-1}\left(T_{u,v}\right)=\gamma_u' \cond uv \subseteq W_{k-1} \bigr]
      }
     {\int_{\gamma_u' \in C^\infty_{k-1}}
      P_k(u,v,\gamma_u')
      \dint\Pr\bigl[c_{k-1}\left(T_{u,v}\right)=\gamma_u' \cond uv \subseteq W_{k-1} \bigr]}
\;\mbox{.}\label{eq-a-final}
\end{equation}
}
The same trimming is applied to (\ref{eq-b}). First, the probability (\ref{eq-b})
is expressed as
\begin{equation}
\frac{\Pr\bigl[c_k\left(T_{u,v}\right)\in\Gamma_u \land c_k\left(T_{v,u}\right) \in \Gamma_v \cond uv \subseteq W_{k-1} \bigr]}
     {\Pr\bigl[u\in W_k \land c_k\left(T_{v,u}\right) \in \Gamma_v \cond uv \subseteq W_{k-1} \bigr]}
 \;\mbox{.}
\label{eq-bb}
\end{equation}
The numerator of $(\ref{eq-bb})$ is then expanded to
\begin{align*}
\biggl( \int_{\gamma_u' \in C^\infty_{k-1}} &
P_k(u,v,\gamma_u')
\Pr\bigl[c_k\left(T_{u,v}\right)\in\Gamma_u \cond uv \subseteq W_k \land c_{k-1}\left(T_{u,v}\right)=\gamma_u' \bigr]
\\*
&\dint\Pr\bigl[c_{k-1}\left(T_{u,v}\right)=\gamma_u' \cond uv \subseteq W_{k-1} \bigr] \biggr)
\times
\\*
\biggl( \int_{\gamma_v' \in C^\infty_{k-1}} &
\Pr\bigl[c_k\left(T_{v,u}\right)\in\Gamma_v \cond uv \subseteq W_k \land c_{k-1}\left(T_{v,u}\right)=\gamma_v'  \cdot P_k(v,u,\gamma_v')\bigr]
\\*
&\dint\Pr\bigl[c_{k-1}\left(T_{v,u}\right)=\gamma_v' \cond uv \subseteq W_{k-1} \bigr]  \biggr)
\end{align*}
and the denominator of $(\ref{eq-b})$ is expanded to
\begin{align*}
\biggl( \int_{\gamma_u' \in C^\infty_{k-1}} &
P_k(u,v,\gamma_u') \dint\Pr\bigl[c_{k-1}\left(T_{u,v}\right)=\gamma_u' \cond uv \subseteq W_{k-1} \bigr]\biggr)
\times \\*
\biggl( \int_{\gamma_v' \in C^\infty_{k-1}} &
\Pr\bigl[c_k\left(T_{v,u}\right)\in\Gamma_v \cond uv \subseteq W_k \land c_{k-1}\left(T_{v,u}\right)=\gamma_v'  \cdot P_k(v,u,\gamma_v')\bigr]
\\*
&\dint\Pr\bigl[c_{k-1}\left(T_{v,u}\right)=\gamma_v' \cond uv \subseteq W_{k-1} \bigr]  \biggr)
\; \mbox{.}
\end{align*}
We obtain (\ref{eq-a-final}) by cancelling out the integrals over $\gamma_v'$.
The proof is now finished.
\end{proof}

\subsection{Recurrence relations}
\label{sect-recurr}
We now derive recurence relations for the probabilities describing
the behavior of the randomized procedure.
We show how to compute the probabilities after $(k+1)$ rounds only from the probabilities after $k$ rounds.

Recall that $w^i_k$ is the probability that a fixed vertex $u$ has degree $i$ after $k$ rounds
conditioned by the event that $u$ is white after $k$ rounds.
Also recall that $q^i_k$ is the probability that a fixed vertex $u$ with a fixed neighbor $v$
has degree $i$ after $k$ rounds conditioned by the event that both $u$ and $v$ are white after $k$ rounds,
i.e., $uv \subseteq W_k$. Finally, $w_k, r_k$ and $b_k$ are probabilities that a fixed vertex is white, red and
blue, respectively, after $k$ rounds.

If $u$ is white,
the {\em white subtree} of $T_{u,v}$ is the maximal subtree containing $u$ and white vertices only.
We claim that the probability that the white subtree of $T_{u,v}$
is isomorphic to a tree in a given subset $\T_0$ after $k$ rounds,
conditioned by the event that both $u$ and $v$ are white after $k$ rounds,
can be computed from the values of $q^i_k$ only.
Indeed, if $T_0\in \T_0$,
the probability that $v$ has degree $i$ as in $T_0$ is $q^i_k$.
Now, if the degree of $v$ is $i$ as in $T_0$ and $z$ is a neighbor of $v$,
the values of $q^i_k$ again determine the probability that the degree of $z$ is as in $T_0$.
By Lemma~\ref{indep}, the probabilities that $v$ and $z$ have certain degrees,
conditioned by the event that they are both white, are independent. Inductively,
we can proceed with other vertices of $T_0$. Applying standard probability arguments,
we see that the values of $q^i_k$ fully determine the probability that the white subtree
of $T_{u,v}$ after $k$ rounds is isomorphic to a tree in $\T_0$.

After the first round, the probabilities $w_k, r_k$, $b_k$ and $q^i_k$, $i\in\{0,1,2,3\}$,
are the following.
\begin{align*}
& b_1 = 1 - \left(1-p_1\right)^3
& q^1_1 =& \left(1-\left(1-p_1\right)^2\right)^2
\\
& r_1 = p_1 \left(1-b_1\right) = p_1 \left(1-p_1\right)^3
& w^3_1 =& \left(1-p_1\right)^6
\\
& w_1 = 1 - b_1 - r_1
& w^2_1 =& 3 \cdot \left(1-p_1\right)^4 \left(1-\left(1-p_1\right)^2\right)
\\
& q^3_1 = \left(1-p_1\right)^4
& w^1_1 =& 3 \cdot \left(1-p_1\right)^2 \left(1-\left(1-p_1\right)^2\right)^2
\\
& q^2_1 = 2 \cdot \left(1-p_1\right)^2 \left(1-\left(1-p_1\right)^2\right)
& w^0_1 =& \left(1-\left(1-p_1\right)^2\right)^3
\end{align*}
A vertex becomes blue if at least one of its neighbors is active and
it becomes red if it is active and none of its neighbors is also active. Otherwise,
a vertex stays white. This leads to the formulas above.

To derive formulas for the probabilities $w_k, r_k$, $b_k$ and $q^i_k$ for $k\ge 2$,
we introduce additional notation.
The recurrence relations can be expressed using $q^i_k$ only,
but additional notation will help us to simplify expressions appearing in our analysis.
For a given edge $uv$ of the infinite tree,
let $P^{\to1}_k$ is the probability that the white subtree of $T_{u,v}$ contains a path
from $u$ to a vertex of degree one with all inner vertices of degree two after $k$ rounds
conditioned by the event $uv\subseteq W_k$. Note that such a path may end at $v$.
$P^{E\to1}_k$ and $P^{O\to1}_k$ are probabilities that the length of such a path is even or odd, respectively.
Analogously, $P^{\to3}_k$, $P^{E\to3}_k$ and $P^{O\to3}_k$ are probabilities that
the white subtree of $T_{u,v}$ contains a path, an even path and an odd path, respectively,
from $u$ to a vertex of degree three
with all inner vertices of degree two after $k$ rounds conditioned by the event $uv\subseteq W_k$.

Using Lemma~\ref{indep}, we conclude that the values of the just
defined probabilities can be computed as follows.
\begin{align*}
P^{\to1}_k =& q^1_k \cdot \textstyle\sum_{\ell\ge0}\left(q^2_k\right)^\ell  =  \frac{q^1_k}{1-q^2_k} &
&P^{\to3}_k = q^3_k \cdot \textstyle\sum_{\ell\ge0}\left(q^2_k\right)^\ell  =  \frac{q^3_k}{1-q^2_k}
\\
P^{O\to1}_k =& q^1_k \cdot \textstyle\sum_{\ell\ge0}\left(q^2_k\right)^{2\ell} =  \frac{q^1_k}{1-\left(q^2_k\right)^2} &
&P^{O\to3}_k = q^3_k \cdot \textstyle\sum_{\ell\ge0}\left(q^2_k\right)^{2\ell} =  \frac{q^3_k}{1-\left(q^2_k\right)^2}
\\
P^{E\to1}_k =& P^{\to1}_k - P^{O\to1}_k = q^2_k \cdot P^{O\to1}_k &
&P^{E\to3}_k = P^{\to3}_k - P^{O\to3}_k = q^2_k \cdot P^{O\to3}_k 
\end{align*}
Observe that $P^{\to1}_k + P^{\to3}_k = 1$.

The formulas for the above probabilities can be easily altered to express the probabilities that
a path exists and one of its inner vertices is active; simply, instead of multiplying
by $q^2_k$, we multiply by $p_2q^2_k$.
$\widehat{P}^{\to3}_k$ is now the probability
that the white subtree of $T_{uv}$ contains a path from $u$ to a vertex of degree three
with all inner vertices of degree two after $k$ rounds and none of them become active,
conditioned by $uv\subseteq W_k$.
Analogously to the previous paragraph,
we use $\widehat{P}^{O\to3}$ and $\widehat{P}^{E\to3}$.
The probabilities $\widehat{P}^{\to3}_k$, $\widehat{P}^{O\to3}$ and $\widehat{P}^{E\to3}$
can be computed in the following way.
\begin{align*}
\widehat{P}^{\to3}_k =& q^3_k \cdot \textstyle\sum_{\ell\ge0} \bigl(q^2_k \cdot\left(1-p_2\right)\bigr)^\ell =
\frac{q^3_k}{1-q^2_k\cdot\left(1-p_2\right)}
\\
\widehat{P}^{O\to3}_k =& q^3_k \cdot \textstyle\sum_{\ell\ge1}\bigl(q^2_k \cdot\left(1-p_2\right)\bigr)^{2\ell} =
\frac{q^3_k}{1-\left(q^2_k\right)^2\cdot\left(1-p_2\right)^2 } &
\\
\widehat{P}^{E\to3}_k =& \widehat{P}^{\to1}_k - \widehat{P}^{O\to3}_k = q^2_k\cdot\left(1-p_2\right) \cdot \widehat{P}^{O\to3}_k &
\end{align*}
The probabilities $\widetilde{P}^{O\to3}_k$ and $\widetilde{P}^{E\to3}$
are the probabilities that such an odd/even path exists and at least one of its inner vertices become active.
Note that $P^{O\to3}_k=\widehat{P}^{O\to3}_k+\widetilde{P}^{O\to3}_k$.
The value of $\widetilde{P}^{O\to3}_k$ is given by the equation
$$\widetilde{P}^{O\to3}_k = \left(q^2_k\right)^2 \cdot
\left(\left(1-p_2\right)^2 \cdot \widetilde{P}^{O\to3}_k + \left(1-\left(1-p_2\right)^2\right) \cdot P^{O\to3}_k\right)\;\mbox{,} $$
which can be manipulated to 
$$\widetilde{P}^{O\to3}_k = \frac{\left(q^2_k\right)^2 \cdot \left(1-\left(1-p_2\right)^2\right) \cdot P^{O\to3}_k}{1-\left(q^2_k\right)^2\cdot\left(1-p_2\right)^2} \; \mbox{.}$$
Using the expression for $\widetilde{P}^{O\to3}_k$,
we derive that $\widetilde{P}^{E\to3}_k$ is equal to the following.
$$ \widetilde{P}^{E\to3}_k = q^2_k\cdot \left( p_2 \cdot P^{O\to3}_k + \left(1-p_2\right) \cdot \widetilde{P}^{O\to3}_k \right) $$

We now show how to compute the probabilities $w_{k+1}, b_{k+1}$ and $r_{k+1}$.
Since blue and red vertices keep their colors once assigned, we have to focus
on the probability that a white vertex changes a color. We distinguish vertices
based on their degrees.
\begin{description}
\item[A vertex of degree zero.] Such a vertex is always recolored to red.
\item[A vertex of degree one.] Such a vertex is always recolored. Its new color is blue only
     if lies on an odd path to another vertex of degree one and the other end is chosen
     to be the beginning of the path. This leads to the following equalities.
     $$ \Pr\big[u \in R_{k+1} \cond u \in W^1_k \big] = \frac{1}{2}P^{O\to1}_k + P^{E\to1}_k + P^{\to3}_k \; \mbox{,} $$
     and
     $$ \Pr\big[u \in B_{k+1} \cond u \in W^1_k \big] = \frac{1}{2}P^{O\to1}_k \; \mbox{.} $$ 
\item[A vertex of degree two.] Since we have already computed the~probabilities that the paths
     of white vertices with degree two
     leading in the two directions from the vertex end at a vertex of degree one/three,
     have odd/even length and contain an active vertex, we can easily determine the probability
     that the vertex stay white or become red or blue. Note that for odd paths with type $1\lra1$ and
     $3\lra3$, in addition, the random choice of the start of the path comes in the play.
     It is then straightforward to derive the following.

{\scriptsize
\begin{align*}		
\Pr&\big[u \in R_{k+1} \cond u \in W^2_k \big] = \left(P^{E\to1}_k\right)^2 + \frac{2}{2} P^{O\to1}_k P^{E\to1}_k
+ 2 P^{E\to1}_k P^{\to3}_k
\\* & + (1-p_2) \cdot \left( \left(
	\widetilde{P}^{O\to3}_k\right)^2 
	+ 2\widetilde{P}^{O\to3}_k \widehat{P}^{O\to3}_k
	+ \frac{2}{2} \widetilde{P}^{O\to3}_k \widetilde{P}^{E\to3}_k
	+ \frac{2}{2} \widehat{P}^{O\to3}_k \widetilde{P}^{E\to3}_k
	+ \frac{2}{2} \widetilde{P}^{O\to3}_k \widehat{P}^{E\to3}_k
	\right)
\\* & + p_2 \cdot \left(\left(P^{O\to3}_k\right)^2 + \frac{2}{2} P^{E\to3}_k P^{O\to3}_k\right)
\\
\Pr&\big[u \in B_{k+1} \cond u \in W^2_k \big] = \left(P^{O\to1}_k\right)^2 + \frac{2}{2} P^{O\to1}_k P^{E\to1}_k
+ 2 P^{O\to1}_k P^{\to3}_k
\\* & + (1-p_2) \cdot \left( \left(
	\widetilde{P}^{E\to3}_k\right)^2
	+ 2\widetilde{P}^{E\to3}_k \widehat{P}^{E\to3}_k
	+ \frac{2}{2} \widetilde{P}^{O\to3}_k \widetilde{P}^{E\to3}_k
	+ \frac{2}{2} \widehat{P}^{O\to3}_k \widetilde{P}^{E\to3}_k
	+ \frac{2}{2} \widetilde{P}^{O\to3}_k \widehat{P}^{E\to3}_k
	\right)
\\* & + p_2 \cdot \left(\left(P^{E\to3}_k\right)^2 + \frac{2}{2} P^{E\to3}_k P^{O\to3}_k\right)
\end{align*}
}
\item[A vertex of degree three.] The vertex either stays white or is recolored to blue.
It stays white if (and only if) all the three paths with inner vertices being white and with degree two
are among the following: even paths to a vertex of degree one,
non-activated paths to a vertex of degree three, 
or activated odd paths to a vertex of degree three that was chosen as the beginning (and thus recolored with blue).
Hence we obtain that
\begin{equation}\label{eq-W3toB}
\Pr\big[u \in B_{k+1} \cond u \in W^3_k \big] = 1 - \left( P^{E\to1}_k + \widehat{P}^{\to3}_k + \frac{1}{2}\widetilde{P}^{O\to3}_k\right)^3
\; \mbox{.}
\end{equation}
\end{description}

Plugging all the probabilites from the above analysis together yields that
\begin{align*}
r_{k+1} =& r_k + w_k \cdot \left( \sum_{i=0}^2 w^i_k \cdot \Pr\big[u \in R_{k+1} \cond u \in W^i_k\big]  \right) 
\\
b_{k+1} =& b_k + w_k \cdot \left( \sum_{i=1}^3 w^i_k \cdot \Pr\big[u \in B_{k+1} \cond u \in W^i_k\big]  \right)
\\
w_{k+1} =& 1- r_{k+1} - b_{k+1}	\; \mbox{.}
\end{align*}

The crucial for the whole analysis is computing the values of $w^i_{k+1}$.
Suppose that $u$ is a white vertex after $k$ rounds.
The values of $w^i_k$ determine the probability that $u$ has degree $i$ and
the values of $q^i_k$ determine the probabilities that white neighbors of $u$ have certain degrees.
In particular, the probability that $u$ has degree $i$ and its neighbors have degrees
$j_1,\dots,j_i$ after $k$ rounds conditioned by $u$ being white after $k$ rounds is equal to
$$w^i_k \cdot \prod\limits_{j\in \left\{ j_1,\dots,j_i \right\}} q^j_k \; \mbox{.}$$
In what follows, the vector of degrees $j_1,\ldots,j_i$ will be denoted by $\vec{J}$.

Let $R^i_{k+1}(\vec{J})$ be the probability that $u$ is white after $(k+1)$ rounds
conditioned by the event that $u$ is white, has degree $i$ and
its white neighbors have degrees $\vec{J}$ after $k$ rounds.
Note that the value of $R^i_{k+1}(\vec{J})$ is the same for all permutation
of entries/degrees of the vector $\vec{J}$.

If $u$ has degree three and all its neighbors also have degree three after $k$ rounds,
the probability $R^3_{k+1}(3,3,3)$ is equal to one: no vertex of
degree three can be colored by red and thus the color of $u$ stays white.
On the other hand, if $u$ or any of its neighbor has degree one,
$u$ does definitely not stay white and the corresponding probability
$R^i_{k+1}(\vec{J})$ is equal to zero.

We now analyze the value of $R^i_{k+1}(\vec{J})$ for the remaining combinations of $i$ and $\vec{J}$.
If $i=2$, then $u$ stays white only if it lies on a non-active path of degree-two vertices
between two vertices of degree three. Consequently, it holds that
\begin{align*}
R^2_{k+1}(2,2) =& \left(1-p_2\right)^3 \cdot \left(\widehat{P}^{\to3}_k\right)^2
\\
R^2_{k+1}(3,2) =& \left(1-p_2\right)^2 \cdot \widehat{P}^{\to3}_k
\\
R^2_{k+1}(3,3) =& \left(1-p_2\right)
\end{align*}
If $i=3$, then $u$ stays white if and only if
for every neighbor $v$ of degree two of $u$,
the path of degree-two vertices from $v$ to $u$ is
\begin{itemize}
\item an odd path to a vertex of degree one, 
\item a non-activated path to a vertex of degree three,
\item an activated even path to vertex of~degree three and $u$ is not chosen as the beginning.
\end{itemize}
Based on this, we obtain that the values of $R^3_{k+1}(\vec{J})$ for the remaining
choices of $\vec{J}$ are the as follows.
\begin{align*}
R^3_{k+1}(2,2,2) =& \left( P^{O\to 1}_k + \left(1-p_2\right) \cdot \left(\widehat{P}^{\to 3}_k + \frac{1}{2}\widetilde{P}^{E\to 3}_k \right) + p_2\cdot \frac{1}{2} P^{E\to 3}_k \right)^3
\\
R^3_{k+1}(2,2,3) =& \left( P^{O\to 1}_k + \left(1-p_2\right) \cdot \left(\widehat{P}^{\to 3}_k + \frac{1}{2}\widetilde{P}^{E\to 3}_k \right) + p_2\cdot \frac{1}{2} P^{E\to 3}_k \right)^2
\\
R^3_{k+1}(2,3,3) =& P^{O\to 1}_k + \left(1-p_2\right) \cdot \left(\widehat{P}^{\to 3}_k + \frac{1}{2}\widetilde{P}^{E\to 3}_k \right) + p_2\cdot \frac{1}{2} P^{E\to 3}_k 
\\
\end{align*}

We now focus on computing the probabilities
$R^{i\to i'}_{k+1}(\vec{J})$ that a vertex $u$ is a white vertex of degree $i'$ after $(k+1)$ rounds
conditioned by the event that $u$ is a white vertex with degree $i$ with neighbors of degrees in $\vec{J}$
after $k$ rounds and $u$ is also white after $(k+1)$ rounds.
For example, $R^{2\to i'}_{k+1}(2,2)$ is equal to one for $i'=2$ and to zero for $i'\ne2$.
To derive formulas for the probabilites $R^{i\to i'}_{k+1}$, we have to introduce some additional notation.
$S^{(i,j)}_{k+1}$ for $(i,j)\in\{(2,2),(2,3),(3,2),(3,3)\}$
will denote the probability that a~vertex~$v$ is white after $(k+1)$ rounds
conditioned by the event that $v$ is a white vertex of degree $j$ after $k$ rounds and
a fixed (white) neighbor $u$ of $v$ has degree $i$ after $k$ rounds and $u$ is white after $(k+1)$ rounds.
It is easy to see that $S^{(2,2)}_{k+1} = 1$.
If $j=3$, the event we condition by guarantees that one of the neighbors of $v$ is white after $(k+1)$ rounds.
Hence, we derive that
$$S^{(2,3)}_{k+1} = S^{(3,3)}_{k+1} = \left( P^{E\to1}_k + \widehat{P}^{\to3}_k + \frac{1}{2}\widetilde{P}^{O\to3}_k\right)^2 \; \mbox{.}$$

Using the probabilities $S^{(i,j)}_{k+1}$, we can easily express some of the probabilities $R^{i\to i'}_{k+1}(\vec{J})$.

{\footnotesize
\begin{align*}
R^{2\to 2}_{k+1}(2,3) =& S^{(2,3)}_{k+1}
&
R^{2\to 2}_{k+1}(3,3) =& \left(S^{(2,3)}_{k+1}\right)^2
\\*
R^{2\to 1}_{k+1}(2,3) =& 1 - S^{(2,3)}_{k+1}
&
R^{2\to 1}_{k+1}(3,3) =& 2 \cdot S^{(2,3)}_{k+1} \left(1 - S^{(2,3)}_{k+1}\right)
\\*
R^{2\to 0}_{k+1}(2,3) =& 0
&
R^{2\to 0}_{k+1}(3,3) =& \left(1 - S^{(2,3)}_{k+1}\right)^2
\\
R^{3\to 3}_{k+1}(3,3,3) =& \left(S^{(3,3)}_{k+1}\right)^3
&
R^{3\to 1}_{k+1}(3,3,3) =& 3 \cdot S^{(3,3)}_{k+1} \left(1 - S^{(3,3)}_{k+1}\right)^2
\\*
R^{3\to 2}_{k+1}(3,3,3) =& 3 \cdot \left(S^{(3,3)}_{k+1}\right)^2 \left(1 - S^{(3,3)}_{k+1}\right)
&
R^{3\to 0}_{k+1}(3,3,3) =& \left(1 - S^{(3,3)}_{k+1}\right)^3
\end{align*}
}

We now determine $S^{(3,2)}_{k+1}$, i.e.,
the probability that a vertex $v$ is white after $(k+1)$ rounds conditioned by the event that
$v$ has degree two after $k$ rounds and
a fixed white neighbor $u$ of $u$ that has degree three after $k$ rounds is white after $(k+1)$ rounds.
Observe that $v$ is white after $(k+1)$ rounds
only if $v$ is contained in a non-active $3\lra3$ path.
Since we condition by the event that $u$ is white after $(k+1)$ rounds,
$v$ cannot be contained in an active $3\lra3$ path of even length or
an active $3\lra3$ odd path with $u$ being chosen as the beginning of this path.
Hence, the value of $S^{(3,2)}_{k+1}$ is the following.
$$S^{(3,2)}_{k+1} = \frac{\left(1-p_2\right) \cdot \widehat{P}^{\to 3}_k}
{P^{O\to 1}_k + \left(1-p_2\right) \cdot \left(\widehat{P}^{\to 3}_k + \frac{1}{2}\widetilde{P}^{E\to 3}_k \right) + p_2\cdot \frac{1}{2} P^{E\to 3}_k}
\; \mbox{.}$$
Using $S^{(3,2)}_{k+1}$, the remaining values of $R^{i\to i'}_{k+1}(\vec{J})$ can be expressed as follows.
{\footnotesize
\begin{align*}
R^{3\to3}_{k+1}(3,3,2) =& \left(S^{(3,3)}_{k+1}\right)^2 \cdot S^{(3,2)}_{k+1}
\\
R^{3\to2}_{k+1}(3,3,2) =& \left(S^{(3,3)}_{k+1}\right)^2 \left(1-S^{(3,2)}_{k+1}\right) + 2\cdot \left(1-S^{(3,3)}_{k+1}\right) S^{(3,3)}_{k+1} \cdot S^{(3,2)}_{k+1} 
\\
R^{3\to1}_{k+1}(3,3,2) =& \left(1-S^{(3,3)}_{k+1}\right)^2 S^{(3,2)}_{k+1} + 2\cdot S^{(3,3)}_{k+1} \left(1-S^{(3,3)}_{k+1}\right) \left(1-S^{(3,2)}_{k+1}\right)
\\
R^{3\to0}_{k+1}(3,3,2) =& \left(1-S^{(3,3)}_{k+1}\right)^2 \left(1-S^{(3,2)}_{k+1}\right)
\\
R^{3\to3}_{k+1}(2,2,3) =& \left(S^{(3,2)}_{k+1}\right)^2 \cdot S^{(3,3)}_{k+1}
\\
R^{3\to2}_{k+1}(2,2,3) =& \left(S^{(3,2)}_{k+1}\right)^2 \left(1-S^{(3,3)}_{k+1}\right) + 2\cdot \left(1-S^{(3,2)}_{k+1}\right) S^{(3,2)}_{k+1} \cdot S^{(3,3)}_{k+1} 
\\
R^{3\to1}_{k+1}(2,2,3) =& \left(1-S^{(3,2)}_{k+1}\right)^2 S^{(3,3)}_{k+1} + 2\cdot S^{(3,2)}_{k+1} \left(1-S^{(3,2)}_{k+1}\right) \left(1-S^{(3,3)}_{k+1}\right)
\\
R^{3\to0}_{k+1}(2,2,3) =& \left(1-S^{(3,2)}_{k+1}\right)^2 \left(1-S^{(3,3)}_{k+1}\right)
\\
R^{3\to 3}_{k+1}(2,2,2) =& \left(S^{(3,2)}_{k+1}\right)^3
\\
R^{3\to 2}_{k+1}(2,2,2) =& 3 \cdot \left(S^{(3,2)}_{k+1}\right)^2 \left(1 - S^{(3,2)}_{k+1}\right)
\\
R^{3\to 1}_{k+1}(2,2,2) =& 3 \cdot S^{(3,2)}_{k+1} \left(1 - S^{(3,2)}_{k+1}\right)^2
\\
R^{3\to 0}_{k+1}(2,2,2) =& \left(1 - S^{(3,2)}_{k+1}\right)^3
\end{align*}
}

Using $R^{i\to i'}_{k+1}(\vec{J})$,
we can compute $w^{i'}_{k+1}$, $i'\in \left\{0,1,2,3\right\}$.
In the formula below, $\J_i$ denotes the set of all possible vectors $\vec{J}$ with $i$ entries and
all entries either two or three.
The denominator of the formula is the probability that the vertex $u$ is white after $k+1$ rounds
conditioned by the event that $u$ is white after $k$ rounds; the nominator is the probability that
$u$ is white and has degree $i'$ after $k+1$ rounds
conditioned by the event that $u$ is white after $k$ rounds.

{\footnotesize
$$w^{i'}_{k+1}=\frac
{\displaystyle\sum_{\substack{i\ge2 \\ i\ge i'}} \sum\limits_{\vec{J}\in\J_i}
w^i_k \cdot \prod_{j\in\vec{J}} q^j_k \cdot R^i_{k+1}\left(\vec{J}\right) \cdot  R^{i\to i'}_{k+1}\left(\vec{J}\right)}
{\displaystyle\sum_{i\ge2} \sum\limits_{\vec{J}\in\J_i}w^i_k \cdot \prod_{j\in\vec{J}} q^j_k \cdot R^i_{k+1}\left(\vec{J}\right)}
$$
}

It remains to exhibit the recurrence relations for the values of $q^i_{k+1}$.
Let $uv$ be an edge of the tree, $i\ge 1$ and $\vec{J} \in \J_i$.
Observe that the probability that $u$ has degree $i$ and its neighbors have degrees $\vec{J}$
after $k$ rounds conditioned by the event $uv \subseteq W_k$ is exactly
$$q^i_k \cdot \prod\limits_{j \in \vec{J}} q^j_k \; \mbox{.}$$
In what follows, we will assume that the first coordinate $j_1$ of $\vec{J}$ corresponds
to the vector $v$.

The probabilites $Q^i_{k+1}$ are defined analogiclly to $R^i_{k+1}$ 
with an additional requirement that $v$ is also white after $k+1$ rounds.
Formally, $Q^i_{k+1}\left(\vec{J}\right)$ is the probability that a vertex $u$ and its fixed neighbor $v$
are both white after $(k+1)$ rounds conditioned by the event that $uv \subseteq W_k$,
$u$ has degree $i$ and its white neighbors have degrees $\vec{J}$ after $k$~rounds.
Observe that the following holds.
\begin{align*}
& Q^2_{k+1}(2,2) = R^2_{k+1}(2,2)
& Q^2_{k+1}(3,2) =& R^2_{k+1}(3,2) \cdot S^{(2,3)}_{k+1}
\\
& Q^2_{k+1}(2,3) = R^2_{k+1}(2,3) = R^2_{k+1}(3,2)
& Q^2_{k+1}(3,3) =& R^2_{k+1}(3,3) \cdot S^{(2,3)}_{k+1}
\\
& Q^3_{k+1}(j_1,j_2,j_3) = R^3_{k+1}(j_1,j_2,j_3) \cdot S^{(3,j_1)}_{k+1}
\end{align*}
Similarly, $Q^{i\to i'}_{k+1}$ is the probability that
a vertex $u$ is a white vertex of degree $i'\ge 1$ and its fixed neighbor $v$ is white after $(k+1)$~rounds
conditioned by the event that $u$ is a white vertex with degree $i$ with neighbors of degrees in $\vec{J}$
after $k$ rounds and $uv \subseteq W_{k+1}$. Using the arguments analogous to those to derive
the formulas for $R^{i\to i'}_{k+1}(\vec{J})$, we obtain the following formulas for $Q^{i\to i'}_{k+1}$.
We provide the list of recurrences to compute the values of $Q^{i\to i'}_{k+1}$ and
leave the actual derivation to the reader.

{\scriptsize
\begin{align*}
& Q^{2\to2}_{k+1}(2,2) =  Q^{2\to 2}_{k+1}(3,2) = 1
& Q^{2\to 2}_{k+1}(2,3) &=  Q^{2\to 2}_{k+1}(3,3) = S^{(2,3)}_{k+1}
\\*
& Q^{2\to1}_{k+1}(2,2) =  Q^{2\to 1}_{k+1}(3,2) = 0
& Q^{2\to 1}_{k+1}(2,3) &=  Q^{2\to 1}_{k+1}(3,3) = 1 - S^{(2,3)}_{k+1}
\end{align*}
\begin{align*}
& Q^{3\to 3}_{k+1}(3,3,3) = \left(S^{(3,3)}_{k+1}\right)^2
& Q^{3\to3}_{k+1}(3,2,2) =& \left(S^{(3,2)}_{k+1}\right)^2 
\\*
& Q^{3\to 2}_{k+1}(3,3,3) = 2 \cdot S^{(3,3)}_{k+1} \left(1 - S^{(3,3)}_{k+1}\right)
& Q^{3\to2}_{k+1}(3,2,2) =& 2\cdot \left(1-S^{(3,2)}_{k+1}\right) S^{(3,2)}_{k+1} 
\\*
& Q^{3\to 1}_{k+1}(3,3,3) = \left(1 - S^{(3,3)}_{k+1}\right)^2
& Q^{3\to1}_{k+1}(3,2,2) =& \left(1-S^{(3,2)}_{k+1}\right)^2
\\
& Q^{3\to3}_{k+1}(3,3,2) = S^{(3,3)}_{k+1} \cdot S^{(3,2)}_{k+1}
& Q^{3\to3}_{k+1}(2,3,3) =& \left(S^{(3,3)}_{k+1}\right)^2
\\*
& Q^{3\to2}_{k+1}(3,3,2) = S^{(3,3)}_{k+1} \left(1-S^{(3,2)}_{k+1}\right) + \left(1-S^{(3,3)}_{k+1}\right) S^{(3,2)}_{k+1} 
& Q^{3\to2}_{k+1}(2,3,3) =& 2\cdot \left(1-S^{(3,3)}_{k+1}\right) S^{(3,3)}_{k+1}
\\
& Q^{3\to1}_{k+1}(3,3,2) = \left(1-S^{(3,3)}_{k+1}\right) \left(1-S^{(3,2)}_{k+1}\right)
& Q^{3\to1}_{k+1}(2,3,3) =& \left(1-S^{(3,3)}_{k+1}\right)^2
\\
& Q^{3\to3}_{k+1}(2,2,3) = S^{(3,2)}_{k+1} \cdot S^{(3,3)}_{k+1}
& Q^{3\to 3}_{k+1}(2,2,2) =& \left(S^{(3,2)}_{k+1}\right)^2
\\
& Q^{3\to2}_{k+1}(2,2,3) = \left(1-S^{(3,2)}_{k+1}\right) S^{(3,3)}_{k+1} + S^{(3,2)}_{k+1} \left(1-S^{(3,3)}_{k+1}\right)
& Q^{3\to 2}_{k+1}(2,2,2) =& 2 \cdot \left(1 - S^{(3,2)}_{k+1}\right) S^{(3,2)}_{k+1} 
\\
& Q^{3\to1}_{k+1}(2,2,3) = \left(1-S^{(3,2)}_{k+1}\right) \left(1-S^{(3,3)}_{k+1}\right)
& Q^{3\to 1}_{k+1}(2,2,2) =& \left(1 - S^{(3,2)}_{k+1}\right)^2
\end{align*}
}

Using the values of $Q^{i}_{k+1}$ and $Q^{i\to i'}_{k+1}$,
we can compute the values of $q^{i'}_{k+1}$ for $i' \in \left\{1,2,3\right\}$. 
The denominator of the formula is the probability that the vertices $u$ and $v$ are white after $k+1$ rounds
conditioned by the event that $u$ and $v$ are white after $k$ rounds; the nominator is the probability that
$u$ are $v$ are white and $u$ has degree $i'$ after $k+1$ rounds
conditioned by the event that $u$ and $v$ are white after $k$ rounds.
{\footnotesize
$$q^{i'}_{k+1}=\frac
{\displaystyle\sum_{\substack{i\ge2 \\ i\ge i'}} \sum\limits_{\vec{J}\in\J_i}
q^i_k \cdot \prod_{j\in\vec{J}} q^j_k \cdot Q^i_{k+1}\left(\vec{J}\right) \cdot  Q^{i\to i'}_{k+1}\left(\vec{J}\right)}
{\displaystyle\sum_{i\ge2} \sum\limits_{\vec{J}\in\J_i}q^i_k \cdot \prod_{j\in\vec{J}} q^j_k \cdot Q^i_{k+1}\left(\vec{J}\right)}
$$
}

\subsection{Solving the recurrences}

The recurrences presented in this section were solved numerically using the Python program
provided in the Appendix. The particular choice of parameters used in the program
was $p_1=p_2=10^{-5}$ and $K=\rounds$. The choice of $K$ was made in such a way that
$w_K\le 10^{-6}$. We also estimated the precision of our calculations
based on the representation of float numbers to avoid rounding errors effecting
the presented bound on significant digits. Solving the recurrences for the above
choice of parameters we obtain that $r_K > \const$.

\section{High-girth graphs}
\label{sect-girth}

In this section, we show how to modify the randomized procedure from the previous section
that it can be applied to cubic graphs with large girth.
In order to use the analysis of the randomized procedure presented in the previous section,
we have to cope with the dependence of some of the events caused by the presence of cycles
in the graph. To do so, we introduce an additional parameter $L$ which will control the length
of paths causing the dependencies. Then, we will be able to guarantee that the probability that
a fixed vertex of a given cubic graph is red is at least $r_K-o(1)$ assuming the girth of the graph
is at least $8KL+2$.
In particular, if $L$ tends to infinity, we approach the same probability as for the infinite cubic tree.

\subsection{Randomized procedure}

We now describe how the randomized procedure is altered. Let $G$ be the given cubic graph.
We produce a sequence $G_0,\bar{G}_1,G_1,\ldots,\bar{G}_K, G_K$ of vertex-colored subcubic graphs; 
the only vertices in these graphs that have less than three neighbors will always be assigned a new color---black.
The graphs can also contain some additional vertices which do not correspond to the vertices of $G$;
such vertices will be called {\em virtual}.
The graph $G_0$ is the cubic graph $G$ with all vertices colored white.
Assume that $G_{k-1}$ is already defined. Let $\bar{G}_{k}$ be the graph obtained from $G_{k-1}$
using the randomized procedure for the $k$-th round exactly as described for the infinite tree.

Before we describe how the graph $G_k$ is obtained from $\bar{G}_k$,
we need to introduce additional notation.
Let $y_0,y_1,\dots,y_{2L}$ be a fixed path in the cubic tree.
Now consider all possible colorings of $T_{y_1,y_0}$ after $k$ rounds 
that satisfy that
\begin{enumerate}
\item the vertices $y_0,y_1,\dots,y_{2L}$ are white after $k$ rounds and
\item the vertices $y_1,\dots,y_{2L-1}$ have degree two.
\end{enumerate}
Let $\D_k$ be the probability distribution on the colorings of $T_{y_1,y_0}$ that satisfy these two constraints
such that the probability of each coloring is proportional to its probability after $k$ rounds. In other words,
we discard the colorings that do not satisfy the two constraints and normalize the probabilities.

The graph $G_k$ is obtained
from $\bar{G}_k$ by performing the following operation for every path $P$ of type $1\lra1$, $1\lra3$ or $3\lra3$
between vertices $a$ and $b$ that has length at least $2L$ and contains at least one non-virtual inner vertex.
Let $x_u$ be the non-virtual inner vertex of $P$ that is the closest to $a$ and $x_v$ the one closest to $b$.
Let $P_x$ be the subpath between $x_u$ and $x_v$ (inclusively) in $\bar{G}_{k}$.
The process we describe will guarantee that
the non-virtual vertices of $P$ form a subpath of $P$,
i.e., $P_x$ containts exactly non-virtual inner vertices of $P$.
Let $u$ be the neighbor of $x_u$ on $P$ towards $a$, and $v$ the neighbor of $x_v$ on $P$ towards $b$;
$u$ and $a$ are the same if $a$ is non-virtual. Similarly, $v$ and $b$ are the same
if $b$ is non-virtual.

We now modify the graph $\bar{G}_k$ as follows.
Color the vertices of $P_x$ black and remove the edges $x_uu$ and $x_vv$ from the graph.
Then attach to $u$ and $v$ rooted trees $T_u$ and $T_v$,
all of them fully comprised of virtual vertices, such that the colorings
of $T_u$ and $T_v$ are randomly sampled
according to the distribution $\D_{k}$.
The roots of the trees will become adjacent to $u$ or $v$, respectively.
These trees are later referred to as {\em virtual trees}.
Observe that we have created no path between two non-virtual vertices containing a virtual vertex.

After $K$ rounds, the vertices of $G$ receive colors of their counterparts in $G_K$.
In this way, the vertices of $G$ are colored white, blue, red and black and
the red vertices form an independent set.

\subsection{Refining the analysis}

We argue that the analysis for the infinite cubic trees presented in Section~\ref{sect-tree}
also applies to cubic graphs with large girth.
We start with some additional definitions.
Suppose that $G_k$ is the graph obtained after $k$ rounds of the randomized
procedure. 
Let $u$ and $v$ be two vertices of $G_k$.
The vertex $v$ is {\em reachable} from $u$
if both $u$ and $v$ are white and
there exists a path in $G_k$ between $u$ and $v$ comprised of white vertices
with all inner vertices having degree two (recall that the degree of a vertex
is the number of its white neighbors).
Clearly, the relation of being reachable is symmetric.
The vertex $v$ is {\em near} to $u$
if both $u$ and $v$ are white and
either $v$ is reachable from $u$ or $v$ is a neighbor of a white vertex reachable from $u$.
Note that the relation of being near is not symmetric in general.
For a subset $X\subseteq V\left(G_k\right)$ of white vertices,
$N_k(X) \subseteq V\left(G_k\right)$ is the set of white vertices that are near to a vertex of $X$ in $G_k$.

We now prove the following theorem.

\begin{theorem}
Let $K$ be an integer, $p_1$ and $p_2$ positive reals, 
$G$ a cubic graph and $v$ a vertex of $G$.
For every $\varepsilon >0$ 
there exists an integer $L$ such that 
if $G$ has girth at least $8KL+2$, then 
the probability that the vertex $v$ will be red in $G$ at the end of the randomized procedure
is at least $r_K-\varepsilon$
where $r_K$ is the probability that a fixed vertex of the infinite cubic tree
is red after $K$ rounds of the randomized procedure with parameters $p_1$ and $p_2$.
\end{theorem}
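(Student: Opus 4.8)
The plan is to couple the execution of the modified procedure on $G$ with the execution of the original procedure on the infinite cubic tree $T$, rooted so that a neighbourhood of its root is carried isomorphically onto a neighbourhood of $v$, and to show that the colour $v$ finally receives agrees in the two executions unless a ``bad event'' $\mathcal{B}$ of probability at most $\varepsilon$ occurs. As the tree execution colours its root red with probability exactly $r_K$, and on the complement of $\mathcal{B}$ the root of $T$ being red forces $v$ to be red in $G_K$, this yields $\Pr[v\text{ is red in }G_K]\ge r_K-\Pr[\mathcal{B}]\ge r_K-\varepsilon$. The integer $L$ will be chosen as a function of $K$, $p_1$, $p_2$ and $\varepsilon$ only, so the bound is uniform over all cubic $G$ of girth at least $8KL+2$ and all vertices $v$, as required.

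First I would record the structural effect of the truncation step: in $G_k$ every maximal path whose inner vertices are white of degree two and whose ends are non-virtual has length less than $2L$, since a longer one would have been cut and, as noted in the construction, the virtual trees never create a path between two non-virtual vertices through a virtual vertex. Running the recurrences of Section~\ref{sect-recurr} backwards, the colour of a white non-virtual vertex $w$ after round $k+1$ is determined by the round-$k$ colouring of a set of $O(L)$ vertices around $w$ (reached along the at most three maximal white-degree-two paths issuing from $w$, including the neighbours of their far ends, or the relevant part of a virtual tree attached at $w$) together with the $(k+1)$-st round randomness there. Iterating $K$ times, the colour of $v$ in $G_K$ is a function of the colouring history inside the ball $B$ of radius less than $K(2L+1)\le 4KL$ around $v$ (for $L\ge1$) and of the samples drawn for the virtual trees attached inside $B$; by the girth hypothesis $B$ is isomorphic to the corresponding ball of $T$. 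I would then couple all the round-by-round random choices made inside $B$ (the activations and the coin flips choosing the beginnings of paths) with those of $T$, and couple each virtual tree sampled during the $G$-execution with the corresponding branch of $T$. The point, which rests on the Independence Lemma (Lemma~\ref{indep}) together with edge-transitivity, is that the law $\D_k$ of a virtual tree is exactly the conditional law of that branch of $T$ given the ``white and of degree two'' constraints along the long path that triggered the cut, and that distinct branches of $T$ are conditionally independent given the whiteness of the relevant edges, so this coupling changes neither marginal.

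The bad event $\mathcal{B}$ is that, while performing the exploration described above, some path that must be followed turns out to have length at least $2L$; equivalently, some non-virtual vertex whose round-$k$ colour influences the colour of $v$ is coloured black, which cannot happen in $T$. To bound $\Pr[\mathcal{B}]$ I would note that, so long as $\mathcal{B}$ has not yet occurred, the exploration reaches at most $(cL)^K$ vertices and performs at most $3K(cL)^K$ path-following operations; for each such operation, which starts from an already reached white vertex $w$ and proceeds along one of its white neighbours at some round $k\le K$, the probability that the maximal white-degree-two path there has length at least $2L$ is at most $3\rho^{2L-1}$, where $\rho=\max_{k\le K}q^2_k<1$ (a white vertex adjacent to a white vertex retains degree three with positive probability, so $q^3_k>0$); this conditional estimate is legitimate by the Independence Lemma, since it concerns only one side of the corresponding edge of $T$ and is therefore independent of the part of the exploration carried out so far. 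A union bound over the — polynomially many in $L$ — path-following operations gives $\Pr[\mathcal{B}]\le 9K(cL)^K\rho^{2L-1}$, which tends to $0$ as $L\to\infty$ since the exponential factor dominates every power of $L$; I would fix $L$ so that this quantity is below $\varepsilon$ and also large enough that the radius of $B$ stays below half the girth.

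Finally I would verify by induction on the round number that, on $\neg\mathcal{B}$, the colouring of $G_k$ and the $k$-th colouring of $T$ coincide on the relevant region: at every step the only source of deviation inside $B$ is a truncation, and on $\neg\mathcal{B}$ no truncated path is relevant, so these truncations do not affect the colour of $v$, while a branch of $T$ that would leave $B$ has been replaced in the $G$-execution by a virtual tree with the same law. Hence $v$ receives in $G_K$ the same colour as the root of $T$ after $K$ rounds, and the inequality stated above follows. The step I expect to be the real obstacle is making the coupling of the virtual trees with the branches of $T$ precise — matching exactly which vertices are forced to be white or of degree two and how much of a path has been consumed before a cut — together with checking that, on $\neg\mathcal{B}$, the relevant region genuinely stays inside $B$, so that the independence properties inherited from the infinite tree remain available.
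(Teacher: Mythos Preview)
Your coupling approach is a genuinely different route from the paper's. The paper does \emph{not} couple the two executions; instead it proves directly (its Claim~5) that for every $k$ the law of the $4(K-k)L$-close tree of $v$ in $G_k$ is \emph{identical} to its law in the infinite tree. This exact match is precisely what the virtual-tree construction buys: when a long path is cut, the tree sampled from $\D_k$ has, by Lemma~\ref{indep}, the conditional law of the branch it replaces, so from $v$'s point of view nothing has changed distributionally. The only error term is then the probability that $v$ itself becomes black, bounded by $\sum_{k=1}^K 2(q^2_k)^{L-1}$ with no union over an exploration region. By contrast, on your event $\neg\mathcal{B}$ no truncation touches the relevant region at all, so the ``coupling of virtual trees with branches of $T$'' that you describe is never actually invoked, and your remark that ``a branch of $T$ that would leave $B$ has been replaced by a virtual tree'' does not match how the procedure works. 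Your argument would go through verbatim for any truncation rule; the specific choice of $\D_k$ is what makes the paper's distributional shortcut possible and is otherwise idle in your version.

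The genuine soft spot in your proposal is the line ``this conditional estimate is legitimate by the Independence Lemma''. Lemma~\ref{indep} gives independence of $c_k(T_{u,v})$ and $c_k(T_{v,u})$ conditioned on $uv\subseteq W_k$ at one fixed round $k$. But when your backward exploration follows a path across the edge $wu$ at round $k$, it has already revealed colours at rounds $k'>k$ of vertices that may lie in $T_{u,w}$, and those later-round colours carry information about the round-$k$ state on that same side; the conditioning you actually face is not of the form the lemma covers. This can be repaired --- iterate the factorisation of Lemma~\ref{claim} to extend the independence across several rounds simultaneously, or reorganise the exploration so that each new path enters a subtree untouched at \emph{any} earlier step --- and the final bound $(cL)^K\rho^{2L-1}\to 0$ survives. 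But this step needs to be argued rather than asserted, and it is exactly the work that the paper's Claim~5 sidesteps.
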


\begin{proof}
We keep the notation introduced in the description of the randomized procedure.
As the first step in the proof, we establish the following two claims.

\begin{claim}
\label{cl-1}
Let $k$ be a non-negative integer,
$u$ a vertex of $G_k$, $c$ one of the colors,
$\gamma_1$ and $\gamma_2$ two colorings of vertices of $G_k$ such that
$u$ is white in both $\gamma_1$ and $\gamma_2$.
If the set of vertices near to $u$ in $\gamma_1$ and $\gamma_2$ induce isomorphic trees rooted at $u$,
then the probability that $u$ has the color $c$ in $\bar{G}_{k+1}$
conditioned by the event that $G_k$ is colored as in $\gamma_1$, and
the probability that $u$ has the color $c$ in $\bar{G}_{k+1}$
conditioned by the event that $G_k$ is colored as in $\gamma_2$, are the same.
\end{claim}

Indeed, the color of $u$ in $\bar{G}_{k+1}$ is influenced only by the length and the types
of the white paths in $G_k$ containing $u$. All the vertices of these paths are near to $u$ 
as well as the white neighbors of the other end-vertices of these paths (which are necessary
to determine the types of the paths).
By the assumption on the colorings $\gamma_1$ and $\gamma_2$,
the two probabilities from the statement of the claim are the same.

We now introduce yet another definiton.
For a subcubic graph $H$ with vertices colored white, red, blue or black,
a vertex $v \in H$ is said to be {\em $d$-close} to a white vertex $u \in H$ if
$v$ is white and there exists a path $P$ from $u$ to $v$ comprised of white vertices such that
\begin{itemize}
\item the length of $P$ is at most $d$, or
\item $P$ contains a vertex $w$ such that $w$ is at distance at most $d$ from $u$ on $P$ and
      each of the first $(2L-1)$ vertices following $w$ (if they exist) has degree two (recall that
      the degree of a vertex refers to the number of its white neighbors).
\end{itemize}
Finally,
a $d$-close tree of $u$ is the subgraph comprised of all vertices $d$-close to $u$ (for our choice
of $d$, this subgraph will always be tree) that is rooted at $u$. By the definition,
the $d$-close tree of $u$ contains white vertices only.

Observe that if $v$ is $d$-close to $u$, then it is also $d'$-close to $u$ for every $d'>d$.
Also observe that if a vertex $v$ of a virtual subtree is $d$-close to a non-virtual vertex $u$,
then all the white vertices lying in the same white component of the virtual subtree
are also $d$-close to $u$:
indeed, consider a path $P$ witnessing that $v$ is $d$-close to $u$ and
let $P_0$ be its subpath from $u$ to the root of the virtual tree.
The path $P_0$ witnesses that the root is $d$-close to $u$ and
$P_0$ can be prolonged by the path comprised of $2L-1$ degree-two virtual
vertices to a path to any white vertex $v'$ of the same white component as $v$.
The new path now witnesses that $v'$ is also $d$-close to $u$.

Let us look at $d$-close sets in the infinite cubic trees.

\begin{claim}
\label{cl-3}
Let $d$ be a non-negative integer,
$T$ an infinite cubic tree with vertices colored red, blue and white, and
$u$ a white vertex of $T$.
If a vertex $v$ is $d$-close to $u$ in $T$,
then every vertex $v'$ that is near to $v$ is $(d+2L)$-close to $u$.
\end{claim}

Let $P$ be a path from $u$ to $v$ that witnesses that $v$ is $d$-close to $u$.
Assume first that the length of $P$ is at most $d$. If $v'$ lies on $P$ or
is a neighbor of a vertex of $P$, then $v'$ is $(d+1)$-close to $u$.
Otherwise, consider the path $P'$ from $u$ to $v'$; observe that
$P$ is a subpath of $P'$ and all the vertices following $v$ on $P$
with a possible exception of $v'$ and the vertex immediately preceding it
have degree two. If the length of $P'$ is at most $d+2L$, then $v'$
is $(d+2L)$-close to $u$. Otherwise, $v$ is followed by at least $2L-1$
vertices of degree two and $v'$ is again $(d+2L)$-close to $u$.

Assume now that the length of $P$ is larger than $d$. Then, $P$ contains
a vertex $w$ at distance at most $d$ from $u$ such that the first $2L-1$ vertices
following $w$ (if they exist) have degree two.
If $v'$ lies on $P$ or is adjacent to a vertex of $P$,
then $v'$ lies on $P$ after $w$ or is adjacent to $w$. In both cases,
$v'$ is $(d+1)$-close to $u$.
In the remaining case, we again consider the path $P'$ from $u$ to $v'$
which must be an extension of $P$ (otherwise, $v'$ would lie on $P$ or
it would be adjacent to a vertex on $P$).
If there are at least $2L-1$ vertices following $w$ on $P$,
then $v'$ is $d$-close to $u$.
Otherwise, either $P'$ contains $2L-1$ vertices of degree two following $w$ or
the length of $P'$ is at most $d+2L$. In both cases, $v'$ is $(d+2L)$-close to $u$.

\bigskip

We are ready to prove our main claim.

\begin{claim}
\label{cl-5}
Let $k\le K-1$ be a positive integer, $T$ a rooted subcubic tree such that
its root is not contained in a path of degree-two vertices of length at least $2L$,
$u$ a vertex of the infinite cubic tree and
$v$ a non-virtual vertex of $G_k$.
The probability that $u$ is white and the $4(K-k)L$-close tree of $u$ in the infinite tree
is isomorphic to $T$ after $k$ rounds is the same as
the probability that $v$ is white and the $4(K-k)L$-close tree of $v$ in $G_k$
is isomorphic to $T$ after $k$ rounds.
\end{claim}

The proof proceeds by induction on $k$.
For $k=0$, both in the infinite tree and in $G_0$,
the probability is equal to one if $T$ is the full rooted cubic tree of depth $4KL$ and
it is zero, otherwise. Here, we use the girth assumption to derive that the subgraph of $G$ induced
by $4KL$-close vertices to $u$ is a tree (otherwise, $G$ would contain a cycle of length at most $8KL+1$).

Suppose $k>0$. Let $\widetilde{T}$ be the $4(K-k)L$-close tree of $v$ in $G_k$ and
$W$ the set of vertices that are $4(K-k)L+2L$-close to $v$ in $G_{k-1}$.
The degree and the color of each vertex in $\bar{G}_k$ is determined by vertices that are near to it in $G_{k-1}$
by Claim~\ref{cl-1}. The induction assumption and Claim~\ref{cl-3} imply that
every vertex of $W$ is white and has a given degree $i$ in $\bar{G}_k$
with the same probability as its counterpart in the infinite cubic tree
assuming that the vertex $u$ of the infinite tree does not lie
on a path of degree-two vertices of length at least $2L$ after $k-1$ rounds.

If $v$ lies on a path with at least $2L-1$ degree-two vertices in $\bar{G}_k$, it becomes black.
Othwerwise, the set $W$ contains all vertices that are $4(K-k)L$-close to $v$ with the exception of
the new virtual vertices that are $4(K-k)L$-close to $v$. Since the colorings of newly added virtual trees
have been sampled according to the distribution $\D_{k+1}$, Lemma~\ref{indep} implies that
the probability that $v$ is white and the $4(K-k)L$-close tree of $v$ is equal to $T$ is the same as
the corresponding probability for $u$ in the infinite cubic tree.

\bigskip

\begin{claim}
\label{cl-6}
Let $k$ be a non-negative integer and $u$ a vertex of the infinite cubic tree.
The probability that $u$ is white and
lies on a white path of length at least $2L$ after $k$ rounds
is at most $2 \cdot \left(q^2_{k}\right)^{L-1}$.
\end{claim}

If $u$ lies on such a path, its degree must be two and 
the length of the path from $u$ in one of the two directions
is at least $L-1$.
The probability that this happens for each of the two possible directions from $u$
is at most $\left(q^2_{k}\right)^{L-1}$. The claim now follows.

\bigskip

Let $p_0=\sum_{k=1}^K 2\left(q^2_{k}\right)^{L-1}$. Observe that $q^2_k<1$.
Indeed, if a vertex $u$ of the infinite tree and all the vertices at distance at most $2K$
from $u$ are white after the first round, then $u$ and its three neighbors must have degree three after $K$ rounds (all
vertices at distance at most $2(K+1-k)$ from $u$ are white after $k$ rounds). Since this happens with non-zero
probability, $q^3_k>0$ and thus $q^2_k<1$. This implies that $p_0$ tends to $0$ with $L$ approaching
the infinity.

Fix a vertex $v$ of $G$.
By Claim~\ref{cl-1}, the probability that $v$ is colored red in the $k$-th round
is fully determined by the vertices that are near to $v$ in $G_{k-1}$.
All such vertices are also $2L$-close to $v$ by Claim~\ref{cl-3}.
Consider a rooted subcubic tree $T$.
If the root of $T$ does not lie on a path with inner vertices
of degree two with length at least $2L$, then the probability that
$v$ is white and the $2L$-close tree of $v$ is isomorphic to $T$
is the same as the analogous probability for a vertex of the infinite tree by Claim~\ref{cl-5}.
Since the probability that $v$ is white and it lies on a path of length at least $2L$ in its $2L$-close tree
at some point during the randomized procedure is at most $p_0$ by Claim~\ref{cl-6},
the probability that $v$ is colored red in $G_K$ is at least $r_K-p_0$.
Since $p_0<\varepsilon$ for $L$ sufficiently large, the statement of the theorem follows.
\end{proof}

\section{Conclusion}

The method we presented here, similarly to the method of Hoppen~\cite{bib-hoppen},
can be applied to $r$-regular graphs for $r\ge 4$.

Another related question is whether
the fractional chromatic number of cubic graphs is bounded away from $3$ under a weaker
assumption that the odd girth (the length of the shortest odd cycle) is large.
This is indeed the case as we now show.

\begin{theorem}
\label{thm-odd}
Let $g\ge 5$ be an odd integer.
The fractional chromatic number of every subcubic graph $G$ with odd girth at least $g$
is at most $\frac{8}{3-6/(g+1)}$.
\end{theorem}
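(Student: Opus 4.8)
The statement is equivalent, by LP duality, to producing for every subcubic graph $G$ of odd girth at least $g$ a probability distribution on the independent sets of $G$ under which every vertex lies in the sampled set with probability at least $\frac{3(g-1)}{8(g+1)}=\frac18\left(3-\frac{6}{g+1}\right)$. The first thing I would do is reduce to cubic $G$: attaching, exactly as in Section~\ref{sect-intro}, copies of (the subcubic graph) $G$ to the vertices of a regular host chosen to have large \emph{odd} girth yields a cubic graph whose odd girth is still at least $g$ (a cycle either stays inside one copy, where it has length $\ge g$, or uses host edges, where it is as long as the host's odd girth), and a good distribution on its independent sets restricts to a good one on some copy of $G$. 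So it suffices to treat cubic $G$ of odd girth at least $g$.

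For cubic $G$ I would revisit the argument of Hatami and Zhu that yields the bound $8/3$ in the large-girth case, keeping track of where girth is actually used. The local input one can substitute for ``large girth'' is the following: for every vertex $v$ the ball $B_G\!\left(v,\tfrac{g-3}{2}\right)$ induces a bipartite subgraph, since two vertices at the same distance from $v$ that were adjacent would, together with their shortest paths to $v$, close up into an odd closed walk of length at most $g-2$, hence into an odd cycle shorter than $g$. Thus, up to radius roughly $g/2$, the graph looks bipartite, and in particular short \emph{even} cycles are harmless for the construction (a genuinely bipartite cubic graph has fractional chromatic number $2$). Feeding this ``local bipartiteness'' into the Hatami--Zhu construction in place of ``local tree'' should show that the only surviving obstruction is a shortest odd cycle, which is essentially a $g$-cycle; quantifying its effect is what degrades the hitting probability from $3/8$ to $\frac{3-6/(g+1)}{8}$ and hence gives the bound $\frac{8}{3-6/(g+1)}$, which tends to $8/3$ as $g\to\infty$. (I expect one is forced to use the cruder $8/3$-argument rather than the sharper randomized procedure of this paper, because the recurrences in Section~\ref{sect-recurr} are tuned to a tree-local structure and do not obviously survive the presence of short even cycles.)

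The step I expect to be the real work is making ``quantifying its effect'' precise, i.e.\ showing that when the random independent set is built locally, the dependencies created by short even cycles can be absorbed using the bipartition of the $\tfrac{g-3}{2}$-ball, while the only genuine cost comes from odd cycles and is governed solely by their length $g$; equivalently, that the worst case over all cubic graphs of odd girth at least $g$ is attained on a graph that locally is a long odd cycle with trees hanging off it. Concretely I would rework the Hatami--Zhu estimate with the ball of radius $\tfrac{g-3}{2}$ (rather than a small tree) as the unit of locality, arrange the resulting inequality so that $g$ enters only through the single parameter $\frac{1}{g+1}$, and then check that the algebra collapses to $\frac{8}{3-6/(g+1)}$; a routine averaging over the (bounded-depth) local construction then yields the global distribution on independent sets of the finite graph $G$.
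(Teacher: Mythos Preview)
Your proposal is not a proof but a plan, and it leaves precisely the hard part unfinished. You say yourself that ``the step I expect to be the real work is making `quantifying its effect' precise'' and then describe only what you \emph{would} do. Nothing in the sketch explains why feeding local bipartiteness of radius-$\tfrac{g-3}{2}$ balls into the Hatami--Zhu construction should produce the specific hitting probability $\tfrac{3}{8}\bigl(1-\tfrac{2}{g+1}\bigr)$; you assert that ``the algebra collapses'' to it but give no mechanism for the factor $1-\tfrac{2}{g+1}$ to appear. The Hatami--Zhu argument is a carefully balanced local construction, and there is no reason to expect that replacing ``tree'' by ``bipartite ball'' degrades the constant in exactly this way without actually carrying out the computation. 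As written, the proposal is a heuristic, not an argument.

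The paper's proof is entirely different and much shorter, relying on a \emph{global} structural input rather than local analysis. After reducing to bridgeless cubic $G$, one takes a $2$-factor $F$. On each cycle of $F$ one chooses a random residue class modulo $(g+1)/2$ and deletes the vertices in that class; every vertex survives with probability $1-\tfrac{2}{g+1}$, and each remaining $F$-path has at most $g-1$ vertices, so the subgraph of $G$ it induces is bipartite by the odd-girth hypothesis. Now colour a random side of each such bipartite piece red, and whenever a matching edge (from the complement of $F$) has both ends red, uncolour one end uniformly at random. Conditioned on surviving the first step, a vertex is red at the end with probability at least $\tfrac12\bigl(1-\tfrac14\bigr)=\tfrac38$, giving overall probability at least $\tfrac{3}{8}\bigl(1-\tfrac{2}{g+1}\bigr)$ and hence $\chi_f(G)\le \tfrac{8}{3-6/(g+1)}$. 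The odd-girth assumption is used exactly once, to guarantee bipartiteness of the short induced pieces; no local tree structure or ball argument is needed.
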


\begin{proof}
Clearly, we can assume that $G$ is bridgeless. If $G$ contains two or more vertices
of degree two, then we include $G$ in a large cubic bridgeless graph with the same
odd girth. Hence, we can assume that $G$ contains at most one vertex of degree two.
Consequently, $G$ has a $2$-factor $F$.

We now construct a probability distribution on the independent sets such that
each vertex is included in the independent set chosen according to this distribution
with probability at least $3(1-2/(g+1))/8$. This implies the claim of the theorem.

Number the vertices of each cycle of $F$ from $1$ to $\ell$ where $\ell$ is the length of the cycle.
Choose randomly a number $k$ between $1$ and $(g+1)/2$ and let $W$ be the set of all vertices
with indices equal to $k$ modulo $(g+1)/2$. Hence, each vertex is not in $W$ with probability
$1-2/(g+1)$.

Let $V_1,\ldots,V_m$ be the sets formed by the paths of $F\setminus W$. Since each set $V_i$,
$i=1,\ldots,m$, contains at most $g-1$ vertices, the subgraph $G[V_i]$ induced in $G$ by $V_i$
is bipartite. Choose randomly (and independently of the other subgraphs)
one of its two color classes and color its vertices red.
Observe that if an edge has both its end-points colored red,
then it must an edge of the matching $M$ complementary to $F$.
If this happens, choose randomly one vertex of this edge and uncolor it.

The resulting set of red vertices is independent. We estimate the probability that a vertex $v$ is red
conditioned by $v\not\in W$. With probability $1/2$, $v$ is initially colored red. However,
with probability at most $1/2$ its neighbor through an edge of $M$ is also colored red (it can happen
that this neighbor is in $W$). If this is the case, then the vertex $v$ is uncolored with probability $1/2$.
Consequently, the probability that $v$ is red is at least $1/2\cdot (1-1/4)=3/8$. Multiplying
by the probability that $v\not\in W$, which is $1-2/(g+1)$, we obtain that the vertex $v$ is included
in the independent set with probability at least $3(1-2/(g+1))/8$ as claimed earlier.
\end{proof}

\newpage

\section*{Appendix}

\begin{python}[label=prg-indep]
W_THOLD = 1.0/1000000
p_1 = 0.00001
p_2 = 0.00001

def state():
   print("

def gen_C():
   S= {1: q[1], 2: q[2], 3: q[3]}
   for u in (1,2,3):
      A=(1,2,3)
      B=S if (u>1) else {"-":1}
      C=S if (u>2) else {"-":1}
      for a in A:
       for b in B:
        for c in C:
           C3[(u,a,b,c)] = w[u]*q[a]*B[b]*C[c]
           C2[(u,a,b,c)] = q[u]*q[a]*B[b]*C[c]

def init():
   global p_r, p_w, p_b

   t = (1-p_1)**2
   w[3] = t**3
   w[2] = 3*(t**2)*(1-t)
   w[1] = 3*t*(1-t)**2
   w[0] = (1-t)**3

   q[3] = t**2
   q[2] = 2*t*(1-t)
   q[1] = (1-t)**2

   p_b = 1-(1-p_1)**3
   p_r = p_1*(1-p_b)
   p_w = 1 - p_r - p_b

k=0
p_w = 1
p_b = p_r = 0

w = [0,0,0,3]
q = [0,0,0,1]
C3 = {}
C2 = {}

while (p_w > W_THOLD):
   k+=1

   if (k==1):
      init()
      state()
      continue
   o1 = q[1]/(1-q[2]**2) 
   e1 = q[2]*o1
   p1 = o1 + e1
   
   o3 = q[3]/(1-q[2]**2) 
   e3 = q[2]*o3
   p3 = o3 + e3

   p3_n = q[3]/(1-q[2]*(1-p_2))
   o3_n = q[3]/(1-q[2]**2 * (1-p_2)**2)
   e3_n = q[2]*(1-p_2)*o3_n

   o3_y = (q[2]**2 *(1-(1-p_2)**2)*o3)/(1-q[2]**2 *(1-p_2)**2)
   e3_y = q[2]*(p_2*o3 + (1-p_2)*o3_y)

   Pr = [1,0,0,0]
   Pb = [0,0,0,0]
   
   Pr[1] = (e1 + o1*.5) + p3
   Pb[1] = o1*.5

   Pr[2] = e1**2 + e1*o1 + 2*e1*p3
   Pr[2]+= (1-p_2)*(o3_y**2 +2*o3_y*o3_n+o3_y*e3_y+o3_n*e3_y+o3_y*e3_n)
   Pr[2]+= p_2*(o3**2 + o3*e3)

   Pb[2] =  o1**2 + e1*o1 + 2*o1*p3
   Pb[2]+= (1-p_2)*(e3_y**2 +2*e3_y*e3_n+o3_y*e3_y+o3_n*e3_y+o3_y*e3_n)
   Pb[2]+= p_2*(e3**2+o3*e3)

   Pb[3] = 1-(p3_n+e1+o3_y/2)**3

   p_r+= p_w*(w[0]+w[1]*Pr[1]+w[2]*Pr[2])
   p_b+= p_w*(w[1]*Pb[1]+w[2]*Pb[2]+w[3]*Pb[3])
   p_w = 1-p_r-p_b
   
   r32 = o1+(1-p_2)*(p3_n+.5*e3_y)+p_2*(.5*e3) 
   s33 = (p3_n+e1+o3_y/2)**2
   s32 = (1-p_2)*p3_n / r32

   gen_C()

   T = [0,0,0,0]
   sum = 0
   for C in C3:
      deg1=False
      for x in C:
         if x==1: 
            deg1=True
            break
      if deg1: continue

      N={}
      C_p = C3[C]

      if (C[0]==2):
          N[3] = {0:1}
         C_p*=1-p_2
         for i in (1,2):
            if (C[i]==3):
               N[i] = {0: s33 }
               N[i][1] = 1-N[i][0]
            elif (C[i]==2):
               C_p *= (1-p_2)*p3_n
               N[i] = {0: 1}

      elif (C[0]==3):
         for i in (1,2,3):
            if (C[i]==3):
               N[i] = {0: s33 }
            elif (C[i]==2):
               C_p *= r32
               N[i] = {0: s32 }

            N[i][1] = 1-N[i][0]

       sum += C_p
      for a in N[1]:
         for b in N[2]:
            for c in N[3]:
               T[C[0]-a-b-c] += C_p*N[1][a]*N[2][b]*N[3][c]

   for i in (0,1,2,3): w[i] = T[i]/sum 
         
   T = [0,0,0,0]
   sum = 0
   for C in C2:
      deg1=False
      for x in C:
         if x==1: 
            deg1=True
            break
      if deg1: continue

      N={}
      C_p = C2[C]

      if (C[1]==3):
         C_p *= s33
      elif (C[1]==2):
         C_p *= (1-p_2)*p3_n

      if (C[0]==2):
          N[3] = {0:1}
         C_p*=1-p_2

         if (C[2]==3):
            N[2] = {0: s33 }
            N[2][1] = 1-N[2][0]
         elif (C[2]==2):
            C_p *= (1-p_2)*p3_n
            N[2] = {0: 1}

      elif (C[0]==3):
         for i in (2,3):
            if (C[i]==3):
               N[i] = {0: s33 }
            elif (C[i]==2):
               C_p *= r32
               N[i] = {0: s32 }

            N[i][1] = 1-N[i][0]
 
       sum += C_p
      for b in N[2]:
         for c in N[3]:
            T[C[0]-b-c] += C_p*N[2][b]*N[3][c]
            
   for i in (1,2,3): q[i] = T[i]/sum 

   state()
\end{python}
\end{document}